\newtheorem{theorem}{Theorem}[section]
\newtheorem{lemma}[theorem]{Lemma}
\numberwithin{equation}{subsection}
\newtheorem{definition}[theorem]{Definition}
\title{Large gaps between consecutive prime numbers containing square-free numbers and perfect powers of prime numbers}
\author{Helmut Maier and Michael Th. Rassias}
\date{\today}
\address{Department of Mathematics, University of Ulm, Helmholtzstrasse 18, 8901 Ulm, Germany.}
\email{helmut.maier@uni-ulm.de}
\address{Department of Mathematics, ETH-Z\"{u}rich, R\"{a}mistrasse 101, 8092 Z\"{u}rich, Switzerland \& Department of Mathematics, Princeton University, Fine Hall, Washington Road, Princeton, NJ 08544-1000, USA}
\email{michail.rassias@math.ethz.ch, michailrassias@math.princeton.edu}
\thanks{}
\begin{document}

 \maketitle
 
\begin{abstract} We prove a modification as well as an improvement of a result of K. Ford, D. R. Heath-Brown and S. Konyagin \cite{konyagin} concerning prime avoidance of square-free
numbers and perfect powers of prime numbers. 

%\textbf{2000 Mathematics Subject Classification:}  26A12; 11L03; 11M06.%
%\newline

\end{abstract}

\section{Introduction}
In their paper \cite{konyagin}, K. Ford, D. R. Heath-Brown and S. Konyagin prove the 
existence of infinitely many ``prime-avoiding" perfect $k$-th powers for any positive integer $k$.\\
They give the following definition of prime avoidance: an integer $m$ is called prime avoiding with constant $c$, if $m+u$ is composite for all integers $u$ satisfying\footnote{We denote by
$\log_2 x=\log\log x$, $\log_3 x=\log\log\log x$, and so on.}
$$|u|\leq c\:\frac{\log m\log_2m\log_4m}{(\log_3m)^2}.$$
In this paper, we prove the following two theorems:
\begin{theorem}\label{firstmain}
There is a constant $c>0$ such that there are infinitely many prime-avoiding square-free 
numbers with constant $c$.
\end{theorem}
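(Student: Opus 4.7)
The plan is to combine the Erd\H{o}s--Rankin large-gap construction, in the form used in \cite{konyagin}, with a sieve for square-free integers in an arithmetic progression. Fix a large parameter $X$. The first step is to invoke the large-gap machinery (Rankin, Maier--Pomerance, Ford--Green--Konyagin--Maynard--Tao, as employed in \cite{konyagin}) to produce a parameter $y=X^{o(1)}$ and, setting $P:=\prod_{p\le y}p=X^{o(1)}$, a system of residues $a_p\pmod{p}$ for each prime $p\le y$ such that any integer $m$ satisfying $m\equiv a_p\pmod{p}$ for all $p\le y$ has $m+u$ composite for every $u$ with $0<|u|\le c_0 f(X)$, where $f(X)=\log X\,\log_2 X\,\log_4 X/(\log_3 X)^2$.

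The second step is to adjust the residues so that $m$ is forced both to be composite and to be square-free in its small prime factors. I would take $a_2=0$ and additionally impose $m\equiv 2\pmod 4$ (so that $2\Vert m$, making $m$ automatically composite and ensuring $4\nmid m$), while insisting that $a_p\ne 0$ for every odd prime $p\le y$ (so that $p\nmid m$, hence $p^2\nmid m$). The loss of the one residue class $a_p=0$ for each odd $p$ costs only a bounded fraction of the covering flexibility used in \cite{konyagin}, so the covering property for $0<|u|\le c f(X)$ can be maintained after replacing $c_0$ by a possibly smaller constant $c>0$.

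The third step is a sieve estimate for square-freeness in the resulting arithmetic progression. Setting $P':=4\prod_{3\le p\le y}p$ and letting $m_0\pmod{P'}$ be the common residue class from Step~2, there are $X/P'+O(1)$ integers $m\equiv m_0\pmod{P'}$ in $(X,2X]$. By Step~2 no prime $p\le y$ can divide $m$ to the second power, so I would bound the count of those $m$ divisible by $q^2$ for some prime $q>y$ by
\[
\sum_{y<q\le\sqrt{2X}}\Bigl(\frac{X}{P'q^2}+1\Bigr)\ll \frac{X}{P'y}+\sqrt{X}=o\!\Bigl(\frac{X}{P'}\Bigr),
\]
using $y\to\infty$ and $P'=X^{o(1)}$. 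Thus a positive proportion of the $m$ in our residue class are square-free, composite and prime-avoiding with constant $c$. Letting $X\to\infty$ produces infinitely many such $m$.

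The step I expect to be the main obstacle is the second one: verifying that the large-gap construction of \cite{konyagin} can be carried out under the additional constraints $a_2=0$ and $a_p\ne 0$ for odd $p\le y$, with only a constant-factor loss in the admissible gap length. This requires revisiting the covering argument in \cite{konyagin} with a slightly restricted residue-class budget, and is where the essential technical input is used.
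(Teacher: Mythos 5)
Your strategy (force square-freeness at the small primes by shifting all residues away from $0$, then sieve out the large prime squares) is genuinely different from the paper's, and as written it has a quantitative error that breaks Step 3. The paper does not modify the covering construction at all: it keeps the standard Rankin-type residues ($m_0\equiv 0\pmod p$ for $p\in P_1$, so the constructed $m_0$ is very far from square-free ``by design''), observes that the modulus $N=\prod_{p\le x/4}p\,\prod_{u\in U_6}p_u$ is itself square-free, and then quotes the elementary bound (Heath-Brown's paper on the least square-free number in an arithmetic progression) that the progression $m_0\pmod N$ contains a square-free element $m\le N^{3/2+\varepsilon}$. Since $N=e^{x+o(x)}$, this costs only the constant factor $3/2$ in passing from $x$ to $\log m$, and no sieve and no re-examination of the covering are needed. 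Your Step 2, by contrast, does require redoing the covering with nonzero residues; this is feasible (it is essentially what the authors do in Section 5, where $(C_1)$ reads $m_0\equiv 1\pmod p$ and the leftover analysis is applied to a shift of $u$), but it is real work that your proposal only flags.

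The concrete gap is the assertion $P=\prod_{p\le y}p=X^{o(1)}$, which is load-bearing in Step 3 and is false for the machinery you are invoking. In the Rankin/Ford--Heath-Brown--Konyagin construction, covering the interval $|u|\le c_0\log X\log_2X\log_4X/(\log_3X)^2$ forces the prime cutoff to be of order $\log X$, so $P'=e^{(1+o(1))\,c\log X}=X^{c+o(1)}$ for a positive constant $c$ (indeed, if one insists $P'\le X$ at all, the correct logic is to fix the construction parameter $x$ first and then choose $X$ as a power of $e^{x}$, not the other way around). Your large-prime-square count contains the term $\sum_{y<q\le\sqrt{2X}}O(1)\ll\sqrt X$, and the conclusion $\sqrt X=o(X/P')$ requires $P'=o(\sqrt X)$, which is not automatic and fails outright if $c\ge 1/2$. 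The repair is routine --- take $X=e^{Ax}$ with a fixed constant $A>2$, so that $P'\le X^{1/A+o(1)}=o(\sqrt X)$, and absorb the factor $A$ into the prime-avoidance constant --- but the justification ``using $y\to\infty$ and $P'=X^{o(1)}$'' does not stand as written, and without the repair the main term $X/P'$ need not dominate the error.
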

\begin{theorem}\label{main}
For any positive integer $k$, there are a constant $c=c(k)>0$ and infinitely many perfect $k$-th powers of prime numbers which are prime-avoiding with constant $c$.
\end{theorem}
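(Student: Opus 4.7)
The plan is to adapt the Ford--Heath-Brown--Konyagin (FHBK) construction of prime-avoiding perfect $k$-th powers so that the base of the $k$-th power is forced to be prime. Fix $k\ge 2$ and a large parameter $y$, and seek a prime $p\in(y,2y]$ such that $m=p^k$ is prime-avoiding with constant $c(k)$, i.e.\ $p^k+u$ composite for all nonzero $u$ with $|u|\le H:=c(k)\log m\log_2m\log_4m/(\log_3m)^2$. For $k\ge 2$ the shift $u=0$ is automatic since $p^k$ is composite. Composite\-ness of $p^k+u$ for each nonzero $u$ will be enforced by exhibiting a small prime divisor: assign to each such $u$ a small prime $q_u$ and a residue $r_u\not\equiv 0\pmod{q_u}$ with $r_u^k\equiv -u\pmod{q_u}$, and require $p\equiv r_u\pmod{q_u}$; then $q_u\mid p^k+u$ and $p^k+u>q_u$.

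The assignment $u\mapsto(q_u,r_u)$ is the first main step. Because $-u$ must be a nonzero $k$-th power residue modulo $q_u$, I would restrict the killing primes to $q\equiv 1\pmod k$ (a positive-density subset by Dirichlet), so that the $k$-th power residues form a subgroup of index $k$ in $(\Z/q\Z)^\times$ and each admits exactly $k$ $k$-th roots. A greedy covering / hypergraph-matching argument of Rankin--Maier--Pomerance type, in the sharpened FHBK form, then covers the interval $[-H,H]$ by the pairs $(q,r)$, leaving only a small exceptional set that can be handled by appending one extra killing prime per leftover shift. CRT consolidates all the resulting congruences into a single condition $p\equiv a\pmod Q$, where $Q=\prod_u q_u$.

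The final step is to produce a prime $p\in(y,2y]$ in the progression $a\pmod Q$, and this is the principal obstacle. One must calibrate the FHBK parameters --- the window of killing primes and the density of the covering --- so that $\log Q=o(\log y)$; the Siegel--Walfisz theorem then delivers $\gg y/(\varphi(Q)\log y)$ such primes, of which at least one will serve. Since in FHBK the analogous modulus $Q$ is close in size to the integer $m$ being constructed, the adaptation requires a careful reworking: we must verify that the window of killing primes still contains enough $q\equiv 1\pmod k$ to cover $[-H,H]$ (losing only a constant factor $1/\varphi(k)$ in the density, absorbed into $c(k)$), while simultaneously forcing $Q$ to be small enough that primes of size $y\asymp m^{1/k}$, rather than $m$ itself, can be located by Siegel--Walfisz. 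Once this balance is struck, letting $y\to\infty$ yields infinitely many prime-avoiding $p^k$ with $p$ prime.
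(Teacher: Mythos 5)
There is a genuine gap, and it is the one you yourself flag as ``the principal obstacle'': no calibration of the covering can make $\log Q=o(\log y)$, and even if it could, Siegel--Walfisz would not apply. To cover an interval of length $H\gg \log m\,\log_2 m\,\log_4 m/(\log_3 m)^2$ with $\log m=k\log y$, the Rankin-type covering forces the modulus $Q$ to be essentially the product of all primes up to a parameter $x$ with $H\asymp x\log x\log_3 x/(\log_2 x)^2$, so $\log Q\asymp x\asymp \log y$ up to iterated logarithms; in particular $Q$ is exponentially larger than the range $Q\le(\log y)^{A}$ in which Siegel--Walfisz gives primes in the progression $a\pmod Q$. (Even the weaker goal $\log Q=o(\log y)$ is unattainable: primes $q>H$ each cover $O(1)$ shifts, so one needs either $\gg H/\log H$ of them or the full Rankin apparatus of small primes, and either way $\log Q\gg \log y$.) This is precisely why the paper does \emph{not} use Siegel--Walfisz: it takes $Q=P(x)=\prod_{p\le x}p$, invokes Maier's Lemma (Lemma \ref{x:lem32}) to choose $x$ with $P(x)$ a ``good'' modulus, and then uses Gallagher's theorem (valid for $N\ge P(x)^{D}$ with an absolute constant $D$, i.e.\ for moduli as large as a fixed root of $N$) inside a Maier matrix to find the primes $m_0+rP(x)$. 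Without this good-moduli input, or some substitute for it, your plan cannot produce the prime base at all.

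A second, independent gap is your treatment of the solvability of $r_u^k\equiv -u\pmod{q_u}$. Choosing $q\equiv 1\pmod k$ is the worst possible choice: the $k$-th powers then form a subgroup of index exactly $k$, so a proportion $1-1/k$ of shifts $u$ cannot be killed by a given $q$ at all, and you give no argument (character sums or otherwise) that every $u$ is a $k$-th power residue modulo some available prime. The paper, following Ford--Heath-Brown--Konyagin, does the opposite: for odd $k$ it selects primes with $\gcd(k,p-1)=1$ so that every residue is a $k$-th power, and for even $k$, where an index-$2$ obstruction is unavoidable, it isolates the exceptional set $\mathcal{U}_6$ of shifts that are quadratic residues for too few of the available primes, bounds it by $O_\varepsilon(x^{1/2+2\varepsilon})$ via a character-sum estimate, and then removes the surviving rows of the matrix by a second application of Brun's sieve (Lemma \ref{x:lem5151}). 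Your proposal of ``appending one extra killing prime per leftover shift'' does not address this residue obstruction and has no analogue of that final sieve step.
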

\section{Proof of the Theorem \ref{firstmain}}
We largely follow the proof of \cite{konyagin}.
\begin{lemma}\label{x:lemk1}
For large $x$ and $z\leq x^{\log_3 x/(10\log_2 x)}$, we have
$$|\{n\leq x\::\: P^+(n)\leq z \}|\ll \frac{x}{(\log x)^5},$$
where $P^+(n)$ denotes the largest prime factor of a positive integer $n$.
\end{lemma}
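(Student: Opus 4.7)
The plan is to bound the counting function $\Psi(x,z) := |\{n\le x : P^+(n)\le z\}|$ by the classical Rankin-type estimate. Writing $u=\log x/\log z$, the range of $z$ gives
$$u \;\ge\; \frac{10\log_2 x}{\log_3 x},$$
so $u\to\infty$ while $u$ remains far smaller than $\log z/\log_2 z$; this is the ``de Bruijn range'' in which the strong bound $\Psi(x,z)\ll x\,u^{-u(1+o(1))}$ is available. The strategy is to plug the lower bound for $u$ into this estimate and verify that the resulting savings exceed $(\log x)^{5}$.

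The core estimate I would use is Rankin's trick: for any $\sigma\in(0,1)$,
$$\Psi(x,z)\;\le\;x^{\sigma}\sum_{P^{+}(n)\le z}n^{-\sigma}\;=\;x^{\sigma}\prod_{p\le z}\bigl(1-p^{-\sigma}\bigr)^{-1}.$$
Optimising by taking $\sigma=1-\xi/\log z$ with a suitable $\xi$ and using Mertens-type estimates for the product over primes yields the classical inequality $\Psi(x,z)\le x\exp\!\bigl(-u\log u + O(u\log\log(u+2))\bigr)$ (this is the form appearing in Tenenbaum's book, or equivalently in de Bruijn's paper). I would simply quote this bound; no new ideas are needed here.

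It remains to check the size of $u\log u$. From $u\ge 10\log_2 x/\log_3 x$,
$$u\log u \;\ge\; \frac{10\log_2 x}{\log_3 x}\Bigl(\log_3 x-\log_4 x+\log 10\Bigr)\;=\;(10+o(1))\log_2 x,$$
so $u^{u}\ge(\log x)^{10+o(1)}$, while the error term $O(u\log\log(u+2))$ is of size $O\!\bigl(\log_2 x\cdot\log_4 x/\log_3 x\bigr)=o(\log_2 x)$ and is absorbed harmlessly. Consequently
$$\Psi(x,z)\;\ll\;\frac{x}{(\log x)^{10-o(1)}}\;\ll\;\frac{x}{(\log x)^{5}},$$
with room to spare.

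The proof has no real obstacle: the only thing to be slightly careful about is verifying that our $u$ lies in the range where the strong de Bruijn estimate (rather than weaker elementary bounds like $\Psi(x,z)\le x\rho(u)(1+o(1))$ valid in narrower ranges) applies; since $u\ll\log_2 x$ is negligible compared with $\log z/\log_2 z\asymp \log x\log_3 x/(\log_2 x)^{2}$, this is immediate. The remaining work is purely the asymptotic bookkeeping carried out above.
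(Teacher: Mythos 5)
Your argument is correct and is essentially the intended one: the paper gives no proof of its own here, merely citing Lemma 2.1 of Ford--Heath-Brown--Konyagin together with Rankin's 1938 paper, and that cited lemma is itself proved by exactly the Rankin-trick/smooth-number bound you invoke. Your bookkeeping checks out: $u\log u\ge(10+o(1))\log_2 x$ yields a saving of $(\log x)^{10+o(1)}$, twice what the lemma requires, and the error term $O(u\log\log(u+2))=o(\log_2 x)$ is indeed harmless. One small loose end: the statement allows \emph{all} $z\le x^{\log_3 x/(10\log_2 x)}$, including bounded $z$, where the strong bound $\Psi(x,z)\le x\exp(-u\log u+O(u\log\log(u+2)))$ fails (there $\Psi(x,z)\asymp(\log x)^{\pi(z)}$ while $u\asymp\log x$); you verify the range condition only at the top of the $z$-range. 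This is repaired in one line by noting that $\Psi(x,\cdot)$ is nondecreasing, so it suffices to treat $z=x^{\log_3 x/(10\log_2 x)}$, which is precisely the case your computation handles.
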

\begin{proof} This is Lemma 2.1 of \cite{konyagin} (see also \cite{rankin}).
\end{proof}
\begin{lemma}\label{x:lemk2}
Let $\mathcal{R}$ denote any set of primes and let $a\in \{-1,1\}$.  Then, for large $x$, we have
\[
 |\{ p\le x\: :\: p\not \equiv a(\bmod{r}) \; (\forall r\in \mathcal{R}) \}|
\ll \frac{x}{\log x} \prod_{\substack{p\in \mathcal{R}\\ p\le x}} \left(1-\frac{1}{p}\right).
\] 
\end{lemma}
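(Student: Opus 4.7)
The plan is to recast the condition $p\not\equiv a\pmod{r}$ as a sieve problem for $\{p-a:p\le x\text{ prime}\}$, apply the Selberg upper bound sieve (with Bombieri--Vinogradov providing the level of distribution) to sift by the ``small'' primes of $\mathcal{R}$, and then absorb the ``large'' primes using Mertens' theorem.

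Since $a=\pm 1$, if $2\in\mathcal{R}$ then the condition $p\not\equiv a\pmod 2$ forces $p=2$ and the claim is trivial, so I may assume $2\notin\mathcal{R}$. For every odd prime $r$ one has $(a,r)=1$, and $p\not\equiv a\pmod r$ is equivalent to $r\nmid(p-a)$. Setting $z=x^{1/4}$, $\mathcal{R}_1=\mathcal{R}\cap[3,z]$, and $\mathcal{R}_2=\mathcal{R}\cap(z,x]$, the quantity $N$ in the lemma satisfies $N\le N_1:=|\{p\le x:r\nmid p-a\ \forall\, r\in\mathcal{R}_1\}|$. Applying Selberg's upper bound sieve to $\mathcal{A}=\{p-a:p\le x\text{ prime}\}$ with sifting set $\mathcal{R}_1$, the local density of $\mathcal{A}$ at $r\in\mathcal{R}_1$ is $1/(r-1)$, and Bombieri--Vinogradov supplies the level of distribution $D=x^{1/2}(\log x)^{-B(A)}$ through
\[
\sum_{\substack{d\le D\\d\text{ squarefree}}}\Bigl|\pi(x;d,a)-\frac{\pi(x)}{\phi(d)}\Bigr|\ll\frac{x}{(\log x)^{A}},
\]
valid for any fixed $A>0$. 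Since $z=x^{1/4}\le D^{1/2}$, Selberg's sieve gives
\[
N_1\ll\frac{x}{\log x}\prod_{r\in\mathcal{R}_1}\Bigl(1-\frac{1}{r-1}\Bigr)\le\frac{x}{\log x}\prod_{r\in\mathcal{R}_1}\Bigl(1-\frac{1}{r}\Bigr).
\]

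To conclude, Mertens' theorem absorbs $\mathcal{R}_2$:
\[
\prod_{r\in\mathcal{R}_2}\Bigl(1-\frac{1}{r}\Bigr)\ge\prod_{z<p\le x}\Bigl(1-\frac{1}{p}\Bigr)\sim\frac{\log z}{\log x}=\frac{1}{4}+o(1),
\]
so $\prod_{r\in\mathcal{R},\,r\le x}(1-1/r)\asymp\prod_{r\in\mathcal{R}_1}(1-1/r)$; combined with the bound on $N_1$, this yields the lemma. The hardest point is the sieve step itself -- verifying the hypotheses of Selberg's sieve in this ``sparse'' setting (in which the density $\omega$ vanishes off $\mathcal{R}$) and controlling the Bombieri--Vinogradov remainder against the (possibly small) main term -- though this is a now-standard procedure and carries no essential difficulty once $z$ is chosen so that the main term dominates.
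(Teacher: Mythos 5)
Your argument is essentially sound, but note that the paper offers no proof of its own here: it simply cites Lemma~2.2 of Ford--Heath-Brown--Konyagin (with a pointer to Halberstam--Richert), so you are reconstructing the cited lemma rather than paralleling anything in the text. Your reconstruction works, but it is heavier than necessary: by sieving the shifted primes $\{p-a:p\le x\}$ you are forced to invoke Bombieri--Vinogradov to control the remainders $\pi(x;d,a)-\pi(x)/\phi(d)$, whereas the standard (and intended) route sieves the \emph{integers} $n\le x$ by the conditions $n\not\equiv 0\ (\bmod\ p)$ for all $p<z_0$ together with $n\not\equiv a\ (\bmod\ r)$ for $r\in\mathcal{R}$, $r<z_0$; there the local densities are $\omega(p)=1$ or $2$, the remainders satisfy the trivial bound $|R_d|\le\omega(d)$, and an elementary upper-bound sieve (Brun or Selberg at a small level) yields $\ll x\prod_{p<z_0}(1-\tfrac1p)\prod_{r\in\mathcal{R},\,r<z_0}(1-\tfrac1r)\ll \frac{x}{\log x}\prod_{r\in\mathcal{R},\,r\le x}(1-\tfrac1r)$ after the same Mertens absorption you use. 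Two points in your write-up deserve attention. First, the step you yourself flag as ``standard'' is the only real content: for a sparse sifting set $\mathcal{P}=\mathcal{R}_1$ the Selberg main term is $X/G(z)$ with $G(z)=\sum_{d\le z,\ d\mid P(z)}g(d)$, and the inequality $1/G(z)\ll\prod_{p\in\mathcal{R}_1}(1-\omega(p)/p)$ is \emph{not} immediate from $G(z)\le\prod(1+g(p))$ (that inequality goes the wrong way); it requires Theorem~3.2 of Halberstam--Richert or a fundamental-lemma argument, which does hold uniformly in the choice of $\mathcal{R}_1$ for a dimension-one sieve, but should be cited explicitly rather than waved at. Second, a cosmetic slip: with $D=x^{1/2}(\log x)^{-B}$ the choice $z=x^{1/4}$ does not satisfy $z\le D^{1/2}$; take $z=x^{1/5}$, which changes nothing in the Mertens step. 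With these repairs your proof is correct.
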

\textit{Note.} Here and in the sequel $p$ will always denote a prime number.
\begin{proof} This is Lemma 2.2 of \cite{konyagin} (see also \cite{halb}).
\end{proof}
\begin{lemma}\label{x:lemak3}
It holds
$$\prod_{p\leq w}\left(1-\frac{1}{p} \right)=\frac{e^{-\gamma}}{\log w}\left( 1+O\left(\frac{1}{\log w}\right)\right),\ \ (w\rightarrow+\infty)\:,$$
where $\gamma$ denotes the Euler-Mascheroni constant.
\end{lemma}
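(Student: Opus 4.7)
The plan is to prove this classical statement (Mertens' third theorem) by taking logarithms, reducing it to Mertens' second theorem on the sum $\sum_{p\le w}1/p$, and then identifying the resulting constant as $-\gamma$. The basic strategy is standard in analytic number theory, and the only real work is bookkeeping the constants.

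First, I would take the logarithm of the product and write
\[
\log\prod_{p\le w}\Bigl(1-\frac{1}{p}\Bigr)=-\sum_{p\le w}\frac{1}{p}-\sum_{p\le w}\sum_{k\ge 2}\frac{1}{k p^{k}},
\]
using the Taylor expansion $-\log(1-x)=\sum_{k\ge 1}x^{k}/k$. The double sum on the right converges absolutely, and completing it to all primes introduces a tail of size $O(1/w)$, so it equals an absolute constant $C_{1}$ plus $O(1/w)$. For the sum $\sum_{p\le w}1/p$, I would invoke Mertens' second theorem, which gives
\[
\sum_{p\le w}\frac{1}{p}=\log\log w+M+O\Bigl(\frac{1}{\log w}\Bigr),
\]
where $M$ is Mertens' constant. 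Combining these expansions yields
\[
\log\prod_{p\le w}\Bigl(1-\frac{1}{p}\Bigr)=-\log\log w-(M+C_{1})+O\Bigl(\frac{1}{\log w}\Bigr).
\]
Exponentiating and using $e^{O(1/\log w)}=1+O(1/\log w)$ gives the claimed asymptotic, with the leading constant equal to $e^{-(M+C_{1})}$.

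The main obstacle, and the only non-routine point, is proving that the constant $M+C_{1}$ equals $\gamma$. The cleanest route is to compare $\log\zeta(s)$ with $\sum_{p}p^{-s}$ as $s\to 1^{+}$: from the Euler product one has $\log\zeta(s)=\sum_{p}p^{-s}+\sum_{p}\sum_{k\ge 2}\frac{1}{k p^{ks}}$, and $\zeta(s)=(s-1)^{-1}+\gamma+O(s-1)$ near $s=1$. Matching the constant terms as $s\to 1^{+}$ against the asymptotic for $\sum_{p\le w}p^{-s}$ obtained by partial summation from Mertens' second theorem pins down the constant and shows $M+C_{1}=\gamma$.

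In practice, since this is textbook material (Mertens 1874, see e.g.\ Tenenbaum or Hardy--Wright), I would simply cite a standard reference for the identification of the constant and focus the exposition on the reduction via the logarithmic expansion. The error term $O(1/\log w)$ propagates cleanly through every step, since the tail of the $k\ge 2$ double sum contributes an error strictly smaller than $1/\log w$, and the exponentiation preserves the relative error.
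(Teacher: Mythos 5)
Your outline is a correct reconstruction of the classical proof of Mertens' third theorem --- take logarithms, reduce to Mertens' second theorem $\sum_{p\le w}1/p=\log\log w+M+O(1/\log w)$, and identify the constant via the behaviour of $\log\zeta(s)$ as $s\to 1^{+}$ --- and this is precisely the argument in the reference the paper cites (Hardy--Wright, Theorem 429); the paper itself gives no proof beyond that citation. Your sketch is sound, including the correct diagnosis that the only non-routine step is pinning the constant down to $e^{-\gamma}$, so there is nothing of substance to compare.
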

\begin{proof}
This is well known (cf. \cite{hardy}, p. 351).
\end{proof}
\begin{definition}\label{x:def24}
Let $x$ be a sufficiently large number. Let also $c_1$ and $c_2$ be two positive constants,
to be chosen later and set 
$$z=x^{c_1\log_3x/\log_2x},\: y=c_2\frac{x\log x \log_3 x}{(\log_2 x)^2}.$$
Let 
\begin{align*}
P_1&=\left\{p\::\: p\leq \log x\ \text{or}\ z<p\leq \frac{x}{4}\right\},\\
P_2&=\left\{p\::\: \log x< p\leq z\right\},\\
U_1&=\{ u\in[-y,y]\::\: u\in\mathbb{Z},\: p\mid u\ \text{for at least one}\ p\in P_1\},\\
U_2&=\{u\in[-y,y]\::\: u\not\in U_1,\: u\not\in\{-1,0,1\}\},\\
U_3&=\{u\in U_2\::\: |u|\ \text{is a prime number}\},\\
U_4&=\{u\in U_2\::\: |u|\ \text{is composed only of prime numbers}\ p\in P_2\}.
\end{align*}
\end{definition}
\begin{lemma}\label{x:paliolem25}
We have
$$U_2=U_3\cup U_4\:.$$
\end{lemma}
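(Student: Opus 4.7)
The inclusion $U_3\cup U_4\subseteq U_2$ is immediate from the definitions of $U_3$ and $U_4$, so the only task is to prove the reverse inclusion. My plan is to take an arbitrary $u\in U_2$, extract from the condition $u\notin U_1$ the list of prime ranges that are allowed to divide $|u|$, and then use the size bound $|u|\le y$ to show that at most one of those ranges can actually contribute a prime factor.

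First I would record that, since $u\in U_2$ forbids divisibility by any $p\in P_1$, every prime factor of $|u|\ge 2$ must either belong to $P_2=(\log x,z]$ or exceed $x/4$ (those being the two complementary ranges not covered by $P_1$). If every prime factor of $|u|$ lies in $P_2$, then $u$ is already in $U_4$ by definition, so assume there is some prime factor $p>x/4$ and write $|u|=pm$ with $m\ge 1$.

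The key step is the size estimate
\[
m\le \frac{|u|}{p}\le \frac{y}{x/4}=\frac{4c_2\log x\,\log_3 x}{(\log_2 x)^2},
\]
whose right-hand side is $o(\log x)$ because $\log_3 x/(\log_2 x)^2\to 0$; in particular $m<\log x$ once $x$ is sufficiently large, irrespective of the eventual choice of $c_2$. From here I would rule out $m>1$ by a two-case check on a hypothetical prime factor $q$ of $m$: if $q\le m<\log x$ then $q\in P_1$, contradicting $u\notin U_1$; while the alternative $q>x/4$ would force $|u|\ge pq>x^2/16$, which is incompatible with $|u|\le y$ for large $x$. The only surviving possibility is $m=1$, meaning $|u|=p$ is prime and therefore $u\in U_3$.

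The only place where care is genuinely required is the estimate of $m$, which is where the particular shape of $y$ in Definition \ref{x:def24} interacts with the cutoff $x/4$ in $P_1$; once $m<\log x$ is established, everything else is just reading off the partition of primes into $P_1$ and $P_2$ from Definition \ref{x:def24}.
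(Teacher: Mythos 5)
Your proof is correct and follows essentially the same route as the paper's: since $u\notin U_1$ rules out all prime factors in $P_1$, a $u\in U_2\setminus U_4$ must have a prime factor $p>x/4$, and the bound $|u|\le y<\tfrac{x}{4}\log x$ forces the cofactor to be $1$, so $|u|$ is prime. Your bounding of the cofactor $m$ directly (rather than the paper's bound $|u|\ge p_0p_1>\tfrac{x}{4}\log x$) is only a cosmetic reorganization of the same estimate.
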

\begin{proof}
Assume that $u\in U_2\setminus U_4$. Then, by Definition \ref{x:def24}, there is a prime number
$p_0\not\in P_2$ with $p_0\mid |u|$. Since by Definition \ref{x:def24} we know that
$u\not\in U_1$, we have
$$p_0> \frac{x}{4}.$$
Let $p_1$ be a prime with $p_1\mid \frac{|u|}{p_0}.$ Then
$$|u|\geq p_0p_1> \frac{x}{4}\log x>y.$$
Thus, $p_1$ does not exist and we have $|u|=p_0$ and therefore $u\in U_3$.
\end{proof}
\begin{lemma}\label{lemma26}
We have 
$$|U_4|\ll\frac{x}{(\log x)^4}\:.$$
\end{lemma}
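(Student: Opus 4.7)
The plan is to bound $|U_4|$ by the count of $z$-smooth integers up to $y$ and then invoke Lemma~\ref{x:lemk1} with $y$ playing the role of $x$. Indeed, every $u \in U_4$ satisfies $|u| \leq y$ and $P^+(|u|) \leq z$ (since all prime factors of $|u|$ lie in $P_2 \subseteq (\log x, z]$), so
$$|U_4| \leq 2\,|\{n \leq y : P^+(n) \leq z\}|.$$

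To apply Lemma~\ref{x:lemk1} with parameter $y$ and smoothness bound $z$, I would first verify the hypothesis $z \leq y^{\log_3 y/(10\log_2 y)}$. From $y = c_2 x \log x \log_3 x/(\log_2 x)^2$ one obtains $\log y = \log x + O(\log_2 x)$, and hence $\log_2 y = \log_2 x + o(1)$ and $\log_3 y = \log_3 x + o(1)$. Consequently
$$\log\!\left(y^{\log_3 y/(10\log_2 y)}\right) = \frac{(1+o(1))\,\log x\,\log_3 x}{10\log_2 x},$$
while $\log z = c_1\,\log x\,\log_3 x/\log_2 x$. The hypothesis therefore holds for all sufficiently large $x$ provided $c_1 \leq 1/10$, and I would fix the value of $c_1$ accordingly (or any smaller value if later constraints in the argument require it).

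Lemma~\ref{x:lemk1} then yields $|\{n \leq y : P^+(n) \leq z\}| \ll y/(\log y)^5$. Substituting the size of $y$ and using $\log y \asymp \log x$ gives
$$|U_4| \ll \frac{y}{(\log y)^5} \ll \frac{x\,\log x\,\log_3 x}{(\log_2 x)^2(\log x)^5} = \frac{x\,\log_3 x}{(\log x)^4(\log_2 x)^2} \ll \frac{x}{(\log x)^4},$$
as claimed. The only point requiring any care is the verification of the smoothness hypothesis, which amounts to comparing $\log z$ with $\log y \cdot \log_3 y/(10 \log_2 y)$; this determines the admissible range of $c_1$, and beyond that the bound follows directly from Lemma~\ref{x:lemk1} and the explicit size of $y$.
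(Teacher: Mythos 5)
Your proof is correct and follows the same route as the paper, which simply states that the bound follows immediately from Lemma~\ref{x:lemk1}; you have filled in the details the paper leaves implicit, namely the substitution of $y$ for $x$, the resulting constraint $c_1\leq 1/10$ on the (as yet unchosen) constant $c_1$, and the final arithmetic with $\log y\asymp\log x$. No issues.
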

\begin{proof}
This follows immediately from Lemma \ref{x:lemk1}.
\end{proof}
\begin{definition}
Let
$$U_5=\{u\in U_3\::\: p\nmid u+1\ \text{for all}\ p\in P_2\}.$$
\end{definition}
\begin{lemma}\label{lemma28}
We can choose the constants $c_1$, $c_2$, such that
$$|U_5|\leq \frac{x}{3\log x}\:.$$
\end{lemma}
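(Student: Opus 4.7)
\medskip

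The plan is to apply Lemma \ref{x:lemk2} to the primes $|u|$ counted by $U_5$. Splitting $U_5$ into $u > 0$ and $u < 0$, each $u \in U_5$ satisfies: $|u| \le y$ is prime, and $u + 1 \not\equiv 0 \pmod{p}$ for every $p \in P_2$. Translating this into a congruence condition on $q := |u|$, we get $q \not\equiv -1 \pmod{p}$ (if $u > 0$) or $q \not\equiv 1 \pmod{p}$ (if $u < 0$) for every $p \in P_2$. Thus, by Lemma \ref{x:lemk2} with $\mathcal{R} = P_2$ and $a = \pm 1$,
\[
|U_5| \ll \frac{y}{\log y} \prod_{p \in P_2} \left(1 - \frac{1}{p}\right).
\]

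Next, I would evaluate the product over $P_2 = \{p : \log x < p \le z\}$ using Mertens' estimate (Lemma \ref{x:lemak3}). Writing the product as the quotient of the products up to $z$ and up to $\log x$, one obtains
\[
\prod_{p \in P_2} \left(1 - \frac{1}{p}\right) = \frac{\log_2 x}{\log z}\left(1 + o(1)\right).
\]
With the choice $\log z = c_1 \log x \log_3 x / \log_2 x$, this equals $\frac{(\log_2 x)^2}{c_1 \log x \log_3 x}(1+o(1))$.

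Now I plug in $y = c_2 x \log x \log_3 x /(\log_2 x)^2$ and use $\log y = \log x + O(\log_2 x) \sim \log x$. Then
\[
\frac{y}{\log y} \sim \frac{c_2 \, x \log_3 x}{(\log_2 x)^2},
\]
and combining with the product estimate yields
\[
|U_5| \ll \frac{c_2}{c_1}\cdot \frac{x}{\log x},
\]
where the implied constant is absolute. By choosing $c_1, c_2$ with $c_2/c_1$ sufficiently small (relative to this absolute constant and the factor $2$ absorbed from the two sign cases), the right-hand side can be made at most $\frac{x}{3\log x}$, establishing the lemma.

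The only delicate point is bookkeeping: one must ensure the $o(1)$ losses from Mertens' formula and from $\log y \sim \log x$ do not swallow the flexibility in $c_1, c_2$. Since these losses are independent of the choice of $c_1, c_2$ (they only depend on $x$ being large), the constants can be fixed first and then $x$ taken large enough to absorb the error terms. No genuine obstacle beyond careful accounting arises.
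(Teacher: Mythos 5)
Your proposal is correct and follows essentially the same route as the paper: apply Lemma \ref{x:lemk2} with $\mathcal{R}=P_2$ to bound $|U_5|\ll \frac{y}{\log y}\prod_{p\in P_2}\left(1-\frac{1}{p}\right)$, evaluate the product via Lemma \ref{x:lemak3}, and conclude $|U_5|\ll \frac{c_2}{c_1}\frac{x}{\log x}$ with the ratio $c_2/c_1$ chosen small. Your treatment of the two sign cases and the error-term bookkeeping is just a more explicit version of the paper's argument.
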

\begin{proof}
By Lemma \ref{x:lemk2}, we have
$$|U_5|\ll\frac{y}{\log y}\prod_{p\in P_2}\left(1-\frac{1}{p}\right).$$
Additionally, by Lemma \ref{x:lemak3}, we have
$$\prod_{p\in P_2}\left(1-\frac{1}{p}\right)=\frac{(\log_2 x)^2}{c_1\log x\cdot \log_3 x}\left(1+O\left(\frac{1}{\log_2 x}\right)\right).$$
Therefore,
$$|U_5|\ll \frac{c_2}{c_1}\frac{x}{\log x},$$
which proves Lemma \ref{lemma28}
\end{proof}
\begin{definition}\label{def29}
We set 
$$U_6=U_4\cup U_5\cup \{-1,0,1\}\:.$$
\end{definition}
\begin{lemma}
We have 
$$|U_6|\leq \frac{x}{2\log x}\:.$$
\end{lemma}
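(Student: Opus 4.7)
The plan is to simply bound $|U_6|$ by the sum of the cardinalities of its three defining pieces, invoking the bounds already proved for $U_4$ and $U_5$ and noting that $\{-1,0,1\}$ contributes only $3$. Specifically, by Definition \ref{def29} and the triangle inequality,
\[
|U_6| \leq |U_4| + |U_5| + 3.
\]

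Next, I would substitute the bound $|U_4| \ll x/(\log x)^4$ from Lemma \ref{lemma26} and the bound $|U_5| \leq x/(3\log x)$ from Lemma \ref{lemma28}. The first of these is of strictly smaller order than $x/\log x$, so for $x$ sufficiently large it is dominated by any fixed positive multiple of $x/\log x$. In particular, there exists $x_0$ so that for $x \geq x_0$ we have
\[
|U_4| + 3 \leq \frac{x}{6\log x}.
\]

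Combining with Lemma \ref{lemma28} then yields
\[
|U_6| \leq \frac{x}{3\log x} + \frac{x}{6\log x} = \frac{x}{2\log x},
\]
which is the claimed estimate. There is no real obstacle here; the lemma is essentially a bookkeeping step that packages the estimates of Lemmas \ref{lemma26} and \ref{lemma28} into a single clean bound that will be used subsequently. The only tacit point is that the inequality is asserted for $x$ sufficiently large, in line with the standing assumption of Definition \ref{x:def24}.
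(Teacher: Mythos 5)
Your proof is correct and follows exactly the paper's (one-line) argument: the paper likewise deduces the bound directly from Definition \ref{def29} together with Lemmas \ref{lemma26} and \ref{lemma28}, and your write-up simply makes explicit the absorption of the lower-order terms $|U_4|+3$ into the slack between $\frac{x}{3\log x}$ and $\frac{x}{2\log x}$ for large $x$.
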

\begin{proof}
This follows from Definition \ref{def29} and Lemmas \ref{lemma26}, \ref{lemma28}.
\end{proof}
\begin{definition}\label{def211}
Let 
$$P_3=\left\{p\::\: \frac{x}{4}<p\leq x\right\}.$$
Let $\Phi\::\: U_6\rightarrow P_3$ be an injective map. Such a map $\Phi$ exists, since $$|P_3|\geq |U_6|.$$ We denote
$$\Phi(u)=p_u.$$
We define 
$$N=\prod_{p\leq\frac{x}{4}}p\:\prod_{u\in U_6}p_u.$$
We determine $m_0$ by the inequalities
$$1\leq m_0\leq N$$
and by the congruences: 
\begin{align*}
(1)\ \ \ m_0&\equiv 0\: (\bmod\:p),\ (p\in P_1)\\
(2)\ \ \ m_0&\equiv 1\: (\bmod\:p),\ (p\in P_2)\\
(3)\ \ \ m_0&\equiv -u\: (\bmod\:p_u),\ (p\in \Phi(U_6))\:.
\end{align*}
\end{definition}
\begin{lemma}\label{lemma212}
Let $m\geq 2y$, $m\equiv m_0\: (\bmod\:N)$. Then $m+u$ is composite for $u\in[-y,y]$.
\end{lemma}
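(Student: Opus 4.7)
The plan is a case analysis on $u\in[-y,y]\cap\mathbb{Z}$ according to the partition introduced in Definitions \ref{x:def24}, \ref{def29}, together with Lemma \ref{x:paliolem25}. For each $u$, I would exhibit a prime divisor $p$ of $m+u$ coming from one of the three congruences defining $m_0$. Compositeness will then follow once we check $m+u>p$, so that $p$ is a proper divisor.

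First, if $u\in U_1$, pick any prime $p\in P_1$ with $p\mid u$; by congruence (1) of Definition \ref{def211}, $m\equiv 0\pmod{p}$, so $p\mid m+u$. Next, if $u\in U_6$ (in particular, if $u\in\{-1,0,1\}\cup U_4\cup U_5$), then congruence (3) of Definition \ref{def211} applied to the injectively chosen prime $p_u=\Phi(u)\in P_3$ yields $m+u\equiv 0\pmod{p_u}$. The remaining possibility is $u\in U_2\setminus U_6$; since $U_4\subset U_6$ and $U_2=U_3\cup U_4$ by Lemma \ref{x:paliolem25}, such a $u$ must lie in $U_3\setminus U_5$, and Definition \ref{def211} of $U_5$ forces the existence of some $p\in P_2$ with $p\mid u+1$. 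Congruence (2) of Definition \ref{def211} then gives $m\equiv 1\pmod{p}$, whence $p\mid m+u$.

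It remains to verify that in each case the prime $p$ identified is a proper divisor of $m+u$. From $m\ge 2y$ and $|u|\le y$ we have $m+u\ge y$. All primes used in the three cases lie in $P_1\cup P_2\cup P_3$ and are therefore bounded by $x$. Since
\[
\frac{y}{x}=c_2\,\frac{\log x\,\log_3 x}{(\log_2 x)^2}\longrightarrow\infty\quad(x\to\infty),
\]
we have $y>x\ge p$ for large $x$, so $m+u>p$ and $m+u$ is composite.

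The argument is essentially bookkeeping: the non-trivial inputs are Lemma \ref{x:paliolem25}, which guarantees that the cases exhaust $[-y,y]\cap\mathbb{Z}$, and the dedicated congruences (1)--(3) built into $m_0$. The only step requiring a size estimate, and the one I expect to be easiest to overlook, is the final comparison $m+u>p$, which relies on the specific growth of $y$ relative to $x$ chosen in Definition \ref{x:def24}.
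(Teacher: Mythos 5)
Your proof is correct and follows essentially the same route as the paper: the same case split ($u\in U_1$ via congruence (1), $u\in U_6$ via congruence (3), and the leftover $u\in U_3\setminus U_5$ via congruence (2) on $u+1$), with Lemma \ref{x:paliolem25} guaranteeing exhaustiveness. You are also right to make explicit the final size check $m+u\ge y>x\ge p$, which the paper passes over quickly.
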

\begin{proof}
If $u\in U_6$, then by the congruence (3) of Definition \ref{def211}, we have
$$p_u\mid m_0+u\:.$$
For $u\not\in U_6$, by the definition of the sets $U_1,\dots, U_5$, there is a $p\in P_1$,
such that $p\mid u$ or there is a $p\in P_2$, such that $p\mid u+1$. In both cases $p\mid m+u$, due to the congruences (1) and (2).\\ 
Thus, for all $u\in[-y,y]$ there is a prime $p$ with $p\mid m+u$ and $p<m+u$. Hence,
$m+u$ is composite for all $u\in[-y,y].$
\end{proof}
\textit{Proof of Theorem \ref{firstmain}.}
We now consider the arithmetic progression
\[
m=kN+m_0,\ k\in\mathbb{N}\:.\tag{*}
\]
By elementary methods (see Heath-Brown \cite{heath} for references) the arithmetic progression (*) contains a square-free number 
\[
m\leq N^{3/2+\varepsilon},\tag{1}
\]
where $\varepsilon>0$ is arbitrarily small.\\
By the prime number theorem, we have
\[
N\leq e^{x+o(x)}.\tag{2}
\]
By Lemma \ref{lemma212}, we know that $m+u$ is a composite number for 
$u\in[-y,y].$ By the estimates (1) and (2), we obtain
$$y\geq c\:\frac{\log m\log_2 m\log_4 m}{(\log_3 m)^2}$$
for a constant $c>0$, which proves Theorem \ref{firstmain}.
\section{Sieve estimates}
We introduce some notations borrowed with minor modifications form \cite{konyagin}.\\
Let 
$$\mathcal{A}=\text{a finite set of integers}$$
$$\mathcal{P}=\text{a subset of the set of all prime numbers}\:.$$
For each prime $p\in\mathcal{P}$, suppose that we are given a subset $\mathcal{A}_p\subseteq \mathcal{A}.$\\ Let $\mathcal{A}_1=\mathcal{A}$, 
$$P(z)=\prod_{\substack{p<z \\ p\in\mathcal{P}}}p$$
and
$$S(\mathcal{A},\mathcal{P},z)=\left| \mathcal{A}\setminus \bigcup_{p\mid P(z)}\mathcal{A}_p \right|\:.$$
Then for a positive square-free integer $d$ composed of primes of $\mathcal{P}$
we define:
$$\mathcal{A}_d=\bigcap_{p\mid d} \mathcal{A}_p\:.$$
We assume that there is a multiplicative function $\omega(\cdot)$, such that for any $d$ as above
$$|\mathcal{A}_d|=\frac{\omega(d)}{d}\: X+R_d\:,$$
for some $R_d$, where $X=|\mathcal{A}|$.\\
We set
$$W(z)=\prod_{p\mid P(z)}\left(1-\frac{\omega(p)}{p}\right)\:.$$
\begin{lemma}\textsc{(Brun's sieve)}\label{x:brun}\\
Let the notations be as above. Suppose that:\\
1. $|R_d|\leq \omega(d)$ for any square-free integer $d$ composed of primes of 
$\mathcal{P}$\\
2. there exists a constant $A_1\geq 1$, such that
$$0\leq \frac{\omega(p)}{p}\leq 1-\frac{1}{A_1}\: $$\\
3. there exist constants $\kappa>0$ and $A_2\geq 1$, such that
$$\sum_{w\leq p<z}\frac{\omega(p)\log p}{p} \leq \kappa\log\frac{z}{w}+A_2\:,\ \text{if}\ \ 2\leq w\leq z\:.$$
Let $b$ be a positive integer and let $\lambda$ be a real number satisfying
$$0<\lambda e^{1+\lambda}<1\:.$$
Then
$$S(\mathcal{A},\mathcal{P},z)\leq XW(z)\left\{ 1+2\:\frac{\lambda^{2b+1}e^{2\lambda}}{1-\lambda^2e^{2+2\lambda}}\:\exp\left((2b+3)\frac{c_1}{\lambda \log z}  \right) \right\}+O\left(z^{2b+\frac{2.01}{e^{2\lambda/\kappa}-1}} \right)\:.$$
\end{lemma}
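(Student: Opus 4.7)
The plan is to apply the combinatorial sieve in the form developed by Brun and refined in Halberstam--Richert. The starting point is the Legendre identity
$$S(\mathcal{A},\mathcal{P},z)=\sum_{d\mid P(z)}\mu(d)|\mathcal{A}_d|,$$
which has too many terms for the remainder estimate to be useful. The core idea is to replace $\mu(d)$ by a truncated version $\mu(d)\chi(d)$ where $\chi$ is the indicator of a family $\mathcal{D}$ of square-free divisors of $P(z)$ chosen so that
$$\sum_{d\mid n}\mu(d)\chi(d)\ge \mathbf{1}_{n=1}\qquad (n\mid P(z)),$$
and so that elements of $\mathcal{D}$ are bounded in size. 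Brun's choice, tuned to the parameters $b$ and $\lambda$, retains square-free $d=p_1p_2\cdots p_r$ with primes ordered $p_1>p_2>\cdots>p_r<z$ such that $r\le 2b+1$ and the partial products $p_1\cdots p_j$ obey a geometric restriction involving $\lambda$.

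With this truncation in hand, I would split
$$S(\mathcal{A},\mathcal{P},z)\le X\sum_{d\mid P(z)}\mu(d)\chi(d)\frac{\omega(d)}{d}\;+\;\sum_{d\in\mathcal{D}}|R_d|,$$
using hypothesis~1 in the remainder sum. For the main term, I would compare with $W(z)=\sum_{d\mid P(z)}\mu(d)\omega(d)/d$; the difference is a sum over $d\notin\mathcal{D}$, i.e.\ divisors violating the Brun restriction. Hypothesis~3 (the Mertens-type bound of dimension $\kappa$) controls these tail contributions by a geometric-type series, whose convergence is exactly what the hypothesis $\lambda e^{1+\lambda}<1$ secures. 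Careful bookkeeping of this series, combined with hypothesis~2 which prevents the local factors $(1-\omega(p)/p)$ from degenerating, produces the factor
$$1+\frac{2\lambda^{2b+1}e^{2\lambda}}{1-\lambda^2e^{2+2\lambda}}\exp\!\left((2b+3)\frac{c_1}{\lambda\log z}\right)$$
multiplying $XW(z)$.

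For the remainder, hypothesis~1 gives $|R_d|\le\omega(d)$, so I would need to bound $\sum_{d\in\mathcal D}\omega(d)$. Using hypothesis~3 iteratively together with the explicit size restriction on $d\in\mathcal{D}$ (each such $d$ is of size at most a controlled power of $z$), and keeping track of the dimension $\kappa$, yields a bound of the stated shape $O\!\left(z^{2b+2.01/(e^{2\lambda/\kappa}-1)}\right)$. The constant $2.01$ rather than $2$ appears as the usual slack one absorbs to swallow lower-order factors of $\log z$.

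The main obstacle is the bookkeeping: one must choose $\mathcal{D}$ so delicately that simultaneously (i) the tail $\sum_{d\notin\mathcal{D}}\mu(d)\omega(d)/d$ telescopes into a geometric series governed by $\lambda$, producing precisely the stated correction factor; and (ii) the remainder sum $\sum_{d\in\mathcal{D}}\omega(d)$ collapses into the stated power of $z$. Since both balances are classical, and since the hypotheses here match verbatim those of the standard Brun's sieve (see Halberstam--Richert, Theorem~2.1, or \cite{halb}), I would carry out the proof by invoking that reference after checking that the normalisations coincide.
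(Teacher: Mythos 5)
Your proposal is correct and ends up in essentially the same place as the paper: the paper's entire proof is a one-line citation to Theorem 6.2.5 of Cojocaru--Murty, which is the same classical statement of Brun's sieve (with the same constants $\lambda^{2b+1}e^{2\lambda}/(1-\lambda^2e^{2+2\lambda})$ and $2b+2.01/(e^{2\lambda/\kappa}-1)$) that you ultimately invoke from Halberstam--Richert. Your sketch of the truncation argument is a reasonable outline of how that reference proves it, but the decisive step in both your writeup and the paper's is the appeal to the standard reference.
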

\begin{proof}
This is part of Theorem 6.2.5 of \cite{cojo}.
\end{proof}
\section{Primes in arithmetic progressions}
The following definition is borrowed from \cite{maier}.
\begin{definition}\label{x:definition31}
Let us call an integer $q>1$ a ``good" modulus, if $L(s,\chi)\neq 0$ for all characters $\chi\bmod\:q$ and all $s=\sigma+it$ with
$$\sigma>1-\frac{C_1}{\log\left[ q(|t|+1)\right]}.$$
This definition depends on the size of $C_1>0$.
\end{definition}
\begin{lemma}\label{x:lem32}
There is a constant $C_1>0$ such that, in terms of $C_1$, there exist arbitrarily large values of $x$ for which the modulus
$$P(x)=\prod_{p<x}p$$ is good.
\end{lemma}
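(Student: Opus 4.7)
The plan is to argue by contradiction, combining the classical de la Vall\'ee Poussin--Landau zero-free region (which yields at most one exceptional zero per modulus) with Landau's theorem on the incompatibility of simultaneous Siegel-type zeros of distinct primitive real characters. More precisely, the first input gives an absolute constant $c_2>0$ such that for every modulus $q\geq 2$ the product $\prod_{\chi\bmod q}L(s,\chi)$ has at most one zero in the region $\sigma>1-c_2/\log\bigl(q(|t|+1)\bigr)$, and such an exceptional zero is necessarily a simple real zero of $L(s,\chi)$ for a real non-principal character $\chi$ induced by a uniquely determined primitive real character $\chi^*$ of some conductor $q^*\mid q$. The second input (Landau's theorem) provides an absolute $c_3>0$ such that \emph{distinct} primitive real characters $\chi_1^*,\chi_2^*$ modulo $q_1^*,q_2^*$ whose $L$-functions possess real zeros $\beta_1,\beta_2$ satisfy
$$\min(\beta_1,\beta_2)\leq 1-\frac{c_3}{\log(q_1^* q_2^*)}.$$

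Choose $C_1$ so small that $C_1\leq c_2$ and $C_1<c_3/2$, and suppose for contradiction that there exists $k_0$ with $P(p_k)=\prod_{p<p_k}p$ failing to be good for every $k\geq k_0$. For each such $k$, the first input supplies a unique primitive real character $\chi_k^*$ of conductor $q_k^*\mid P(p_k)$ and a real zero $\beta_k$ of $L(s,\chi_k^*)$ with
$$\beta_k>1-\frac{C_1}{\log P(p_k)}.$$
If the sequence $\chi_k^*$ were eventually equal to a fixed $\chi^*$ with real zero $\beta^*<1$, letting $k\to\infty$ in this lower bound would force $\beta^*\geq 1$, contradicting $L(1,\chi^*)\neq 0$. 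Hence $\chi_k^*$ must change infinitely often, and we may select arbitrarily large $k$ with $\chi_{k+1}^*\neq\chi_k^*$.

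For such a $k$, both $q_k^*$ and $q_{k+1}^*$ are at most $P(p_{k+1})$, so Landau's theorem yields
$$\min(\beta_k,\beta_{k+1})\leq 1-\frac{c_3}{2\log P(p_{k+1})}.$$
The lower bound $\beta_{k+1}>1-C_1/\log P(p_{k+1})$ combined with $C_1<c_3/2$ prevents $\beta_{k+1}$ from being the minimum, so the minimum must be $\beta_k$, giving
$$1-\frac{C_1}{\log P(p_k)}<\beta_k\leq 1-\frac{c_3}{2\log P(p_{k+1})},$$
which rearranges to $\log P(p_{k+1})/\log P(p_k)>c_3/(2C_1)>1$. By the prime number theorem $\log P(p_k)\sim p_k$, and $\log P(p_{k+1})-\log P(p_k)=\log p_{k+1}=o(p_k)$, so the ratio on the left tends to $1$ as $k\to\infty$, contradicting the strict lower bound $c_3/(2C_1)>1$ at any sufficiently large $k$ admitting a character change.

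The main obstacle is aligning the constants so that Landau's comparison inequality reliably eliminates $\beta_{k+1}$ as the minimum, forcing the incompatibility onto $\beta_k$, whose lower bound is governed by the \emph{smaller} modulus $P(p_k)$; this is precisely what is achieved by taking $C_1<c_3/2$. A secondary technicality is verifying that the exceptional character produced by de la Vall\'ee Poussin--Landau may be taken primitive of conductor dividing $P(p_k)$, which is automatic since zeros of $L(s,\chi)$ near $s=1$ arise from its primitive core.
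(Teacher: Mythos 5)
Your proof is correct and is essentially the argument behind the result the paper invokes: the paper's own ``proof'' is just a citation to Lemma 1 of Maier's \emph{Chains of large gaps between consecutive primes}, whose proof is exactly this combination of the de la Vall\'ee Poussin--Landau zero-free region (at most one exceptional real zero per modulus) with the Landau--Page repulsion between distinct primitive real characters, played off against the slow growth $\log P(p_{k+1})/\log P(p_k)\to 1$. The only point worth tightening is the elimination of an eventually constant $\chi_k^*$: rather than ``letting $k\to\infty$ force $\beta^*\geq 1$,'' note that a fixed $L(s,\chi^*)$ has only finitely many real zeros in $(0,1)$, so the $\beta_k$ could not approach $1$ as your lower bound requires.
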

\begin{proof}
This is Lemma 1 of \cite{maier}
\end{proof}
\begin{lemma}
Let $q$ be a good modulus. Then 
$$\pi(x;q,a)\gg \frac{x}{\phi(q)\log x}\:,$$
uniformly for $(a,q)=1$ and $x\geq q^D$.\\
Here the constant $D$ depends only on the value of $C_1$ in Lemma \ref{x:lem32}.
\end{lemma}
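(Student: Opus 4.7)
The plan is to follow the standard route from the explicit formula for $\psi(x;q,a)$ to a lower bound on $\pi(x;q,a)$, using the zero-free region from the good-modulus hypothesis as the essential quantitative input. The argument is in the spirit of the proofs of the prime number theorem in arithmetic progressions and of Linnik's theorem on the least prime in an arithmetic progression, simplified by the fact that the good-modulus condition rules out Siegel-type exceptional zeros. The first step is the character decomposition
$$\psi(x;q,a)=\frac{1}{\phi(q)}\sum_{\chi\bmod q}\bar\chi(a)\,\psi(x,\chi),\qquad \psi(x,\chi)=\sum_{n\le x}\chi(n)\Lambda(n),$$
which isolates the main term $x/\phi(q)$ coming from the principal character. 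The proof reduces to showing that, for $x\ge q^{D}$ with $D=D(C_{1})$ chosen suitably large, the contribution of the non-principal characters is bounded in absolute value by $x/(2\phi(q))$.

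For each non-principal $\chi$, I would apply the truncated explicit formula
$$\psi(x,\chi)=-\sum_{|\gamma|\le T}\frac{x^{\rho}}{\rho}+O\!\left(\frac{x(\log qxT)^{2}}{T}\right),$$
where $\rho=\beta+i\gamma$ runs over the non-trivial zeros of $L(s,\chi)$. By the good-modulus hypothesis, every such zero satisfies $\beta\le 1-C_{1}/\log[q(|\gamma|+2)]$, yielding the pointwise saving
$$x^{\beta}\;\le\;x\exp\!\left(-\frac{C_{1}\log x}{\log[q(|\gamma|+2)]}\right).$$
Combining this with a zero-density bound (either the classical $N(T,\chi)-N(T-1,\chi)\ll\log(qT)$, supplemented if necessary by a log-free estimate of the form $\sum_{\chi\bmod q}N(\alpha,T,\chi)\ll(qT)^{A(1-\alpha)}$), splitting the zeros dyadically in $|\gamma|$, and choosing the truncation height $T$ optimally lead to $\psi(x;q,a)\ge x/(2\phi(q))$. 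The hypothesis $x\ge q^{D}$ is what makes the saving $\exp(-C_{1}\log x/\log q)\le\exp(-C_{1}D)$ at the low-lying zeros strong enough to overcome the factor $\phi(q)\le q\le x^{1/D}$.

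With $\psi(x;q,a)\ge x/(2\phi(q))$ established for $x\ge q^{D}$, the passage to $\pi(x;q,a)$ is standard: removing prime powers $p^{k}$ with $k\ge 2$, which contribute at most $O(x^{1/2}\log x)$ in total, gives the same bound for $\theta(x;q,a)=\sum_{p\le x,\,p\equiv a\,(\bmod q)}\log p$, and partial summation yields $\pi(x;q,a)\gg x/(\phi(q)\log x)$.

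The main obstacle is the quantitative balance in the step bounding the contribution of non-principal characters: the zero-free region weakens as $|\gamma|$ grows, while the density of zeros increases with $|\gamma|$, so a careless dyadic estimate produces polylogarithmic factors in $x$ and $q$ that must be absorbed by the exponential saving. Since the hypothesis permits $q$ as large as $x^{1/D}$, this saving must genuinely win against $\phi(q)$, which is what forces $D$ to be chosen large in terms of $C_{1}$. Calibrating the truncation height $T$, the zero-density estimate used, and the value of $D$ so as to meet this balance is the technical heart of the proof.
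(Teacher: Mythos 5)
The paper does not actually prove this lemma: it is quoted verbatim as Lemma 2 of Maier's paper \cite{maier}, which in turn attributes it to Gallagher \cite{gala}, so the only meaningful comparison is with Gallagher's original argument. Your sketch follows exactly that argument --- orthogonality of characters, truncated explicit formula, the zero-free region supplied by the good-modulus hypothesis, a zero-density input, and partial summation --- so the strategy is the right one. The one place where your write-up undersells a critical point is the phrase ``supplemented if necessary by a log-free estimate'': the log-free density estimate $\sum_{\chi\bmod q}N(\alpha,T,\chi)\ll (qT)^{A(1-\alpha)}$ is not an optional refinement but the indispensable ingredient (it is essentially the entire content of Gallagher's paper, hence its title). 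With only the classical bound $N(T,\chi)-N(T-1,\chi)\ll\log(qT)$ one has roughly $\phi(q)\log q$ low-lying zeros in total over all characters mod $q$, each contributing as much as $x\exp\left(-C_1\log x/\log(2q)\right)\approx x\,e^{-C_1D}$ when $x=q^{D}$; since $D$ is a fixed constant while $q$ may tend to infinity, the constant saving $e^{-C_1D}$ cannot absorb the factor $\phi(q)\log q$, and the non-principal contribution is not bounded by $x/2$ --- the dyadic splitting you describe collapses at exactly this point. The log-free estimate replaces the count $\phi(q)\log q$ by an absolute constant for zeros lying within any fixed multiple of the width of the zero-free region, which is precisely what lets the exponential saving win and what determines how large $D$ must be taken in terms of $C_1$. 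With that dependence made explicit rather than hedged, the remainder of your outline (the passage from $\psi(x;q,a)$ to $\theta(x;q,a)$ to $\pi(x;q,a)$) is standard and correct.
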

\begin{proof}
This result, which is due to Gallagher \cite{gala}, is Lemma 2 from \cite{maier}.
\end{proof}
\section{Congruence conditions for the prime-avoiding number}
Let $x$ be a large positive number and $y$, $z$ be defined as in Definition \ref{x:def24}.\\
Set 
$$P(x)=\prod_{p\leq x}p\:.$$
We will give a system of congruences that has a single solution $m_0$, with $$0\leq m_0\le P(x)-1$$ having the property that the interval $[m_0^k-y,m_0^k+y]$ 
contains only few prime numbers.
\begin{definition}
We set
\begin{align*}
\mathcal{P}_1&=\{p\::\: p\leq \log x\ \ \text{or}\ \ z<p\leq x/40k\}\:,\\
\mathcal{P}_2&=\{p\::\: \log x<p\leq z\}\:,\\
\mathcal{U}_1&=\{u\in[-y,y],\:u\in\mathbb{Z},\: p\mid u\ \text{for at least one}\: 
p\in\mathcal{P}_1\}\:,\\
\mathcal{U}_2&= \{u\in[-y,y]\::\:u\not\in  \mathcal{U}_1\}\:,\\
\mathcal{U}_3&= \{u\in[-y,y]\::\:|u|\ \text{is prime}\}\:,\\
\mathcal{U}_4&= \{u\in[-y,y]\::\: P^+(|u|)\leq z\}\:,\\
%\text{where $P^+$ denotes the largest prime factor,}\\
\mathcal{U}_5&= \{u\in \mathcal{U}_3\:: p\nmid u+2^k-1\ \text{for}\ p\in \mathcal{P}_2\}
\end{align*}
\end{definition}
\begin{lemma}
We have $$\mathcal{U}_2=\mathcal{U}_3\cup\mathcal{U}_4\:.$$
\end{lemma}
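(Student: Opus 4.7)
The plan is to follow the pattern of Lemma \ref{x:paliolem25} almost verbatim, adapted to the new constant $x/(40k)$ in the definition of $\mathcal{P}_1$. The nontrivial content is the inclusion $\mathcal{U}_2\subseteq \mathcal{U}_3\cup\mathcal{U}_4$ (restricted to $\mathcal{U}_2$, the reverse direction is immediate by the definitions), and I would prove it by contrapositive: assume $u\in\mathcal{U}_2\setminus\mathcal{U}_4$ and derive that $|u|$ is prime, so $u\in\mathcal{U}_3$.

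First I would use $u\notin\mathcal{U}_4$ to extract a prime $p_0\mid |u|$ with $p_0>z$. The membership $u\in\mathcal{U}_2$, i.e.\ $u\notin\mathcal{U}_1$, forces that no prime in $\mathcal{P}_1$ divides $u$, so in particular $p_0\notin\mathcal{P}_1$. Since
\[
\mathcal{P}_1=\{p:p\le\log x\}\cup\{p:z<p\le x/(40k)\},
\]
the conditions $p_0>z$ and $p_0\notin\mathcal{P}_1$ together imply $p_0>x/(40k)$.

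Next I would argue that $|u|=p_0$ by contradiction. Suppose $|u|\ne p_0$; then $|u|/p_0\ge 2$, so there is a prime $p_1$ dividing $|u|/p_0$. The same reasoning as for $p_0$ applies: $p_1\mid u$ and $u\notin\mathcal{U}_1$ give $p_1\notin\mathcal{P}_1$, hence either $p_1\in\mathcal{P}_2$ (so $p_1>\log x$) or $p_1>x/(40k)$. In either case
\[
|u|\ge p_0p_1 \;>\; \frac{x}{40k}\cdot\log x.
\]
Comparing with $y=c_2\,\dfrac{x\log x\log_3 x}{(\log_2 x)^2}$, the ratio $y\big/\!\bigl(\tfrac{x\log x}{40k}\bigr)=40k\,c_2\,\dfrac{\log_3 x}{(\log_2 x)^2}\to 0$, so for $x$ large enough $|u|>y$, contradicting $u\in[-y,y]$. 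Hence $|u|=p_0$ is prime, so $u\in\mathcal{U}_3$, completing the proof.

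I do not anticipate a real obstacle, since the structure is identical to that of Lemma \ref{x:paliolem25}; the only point requiring care is that the threshold in $\mathcal{P}_1$ is now $x/(40k)$ rather than $x/4$, and one must verify that the product $(x/40k)\cdot\log x$ still dominates $y$ asymptotically, which it does because $\log_3 x/(\log_2 x)^2\to 0$.
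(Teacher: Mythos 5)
Your argument is correct and is essentially the paper's own proof: the paper simply cites Lemma \ref{x:paliolem25} here, and you reproduce that argument with the threshold $x/4$ replaced by $x/(40k)$, correctly checking that $(x/(40k))\log x$ still exceeds $y$ because $\log_3 x/(\log_2 x)^2\to 0$. (Your parenthetical caution about the reverse inclusion is well taken, since in this section $\mathcal{U}_3$ and $\mathcal{U}_4$ are no longer literally defined as subsets of $\mathcal{U}_2$, but that is a harmless imprecision of the paper, not of your proof.)
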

\begin{proof}
This is Lemma \ref{x:paliolem25}.
\end{proof}
\begin{lemma}\label{x:lem43}
We have 
$$|\mathcal{U}_4|\ll\frac{x}{(\log x)^4}\:.$$
\end{lemma}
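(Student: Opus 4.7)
The plan is to apply Lemma \ref{x:lemk1} directly, with the variable $x$ in that lemma replaced by $y$. First, I would reduce to the positive case: since $|u|$ appears only through its prime factorization,
$$|\mathcal{U}_4| \leq 2\,|\{n \leq y : P^+(n) \leq z\}| + O(1),$$
by pairing $u$ with $-u$ and separately accounting for $u=0$.

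Next, I would verify that the hypothesis of Lemma \ref{x:lemk1} is satisfied with $x$ replaced by $y$. Since $y = c_2\,x\log x\log_3 x/(\log_2 x)^2$, we have $\log y = \log x + O(\log_2 x) = (1+o(1))\log x$, and consequently $\log_2 y \sim \log_2 x$ and $\log_3 y \sim \log_3 x$. Therefore
$$y^{\log_3 y / (10\log_2 y)} = x^{(1+o(1))\log_3 x/(10\log_2 x)},$$
and the required inequality $z \leq y^{\log_3 y/(10\log_2 y)}$ reduces, after taking logarithms, to the condition $c_1 \leq 1/10 - o(1)$. This is the only constraint the proof imposes, and it only restricts the choice of $c_1$ made back in Definition \ref{x:def24}; I would simply assume $c_1$ has been chosen small enough (say $c_1 \leq 1/11$) to accommodate this, noting that smaller $c_1$ is compatible with all previous uses of the constant.

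Having checked the hypothesis, Lemma \ref{x:lemk1} applied with $x\to y$ yields
$$|\{n \leq y : P^+(n) \leq z\}| \ll \frac{y}{(\log y)^5}.$$
Finally I would substitute the definition of $y$ and use $\log y \sim \log x$ to obtain
$$\frac{y}{(\log y)^5} \ll \frac{x\log x\log_3 x}{(\log_2 x)^2 (\log x)^5} = \frac{x\log_3 x}{(\log_2 x)^2(\log x)^4} \ll \frac{x}{(\log x)^4},$$
since $\log_3 x = o((\log_2 x)^2)$. Combining this with the initial symmetry reduction gives the desired bound.

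There is no real obstacle here: the proof is essentially the same application of Lemma \ref{x:lemk1} that was used to derive Lemma \ref{lemma26} in the previous section, the only mild check being the size condition on $z$ relative to $y$ rather than $x$. The slight loss in passing from the factor $1/(\log y)^5$ to $1/(\log x)^4$ is easily absorbed by the factor $\log x \log_3 x/(\log_2 x)^2$ contributed by $y/x$.
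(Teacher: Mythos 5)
Your proof is correct and follows the same route as the paper, which simply reduces Lemma \ref{x:lem43} to Lemma \ref{lemma26} and hence to Lemma \ref{x:lemk1}; you have merely supplied the details (applying that lemma with $x$ replaced by $y$, checking $z\le y^{\log_3 y/(10\log_2 y)}$ via $c_1<1/10$, and absorbing the factor $\log x\log_3 x/(\log_2 x)^2$ into one power of $\log x$) that the paper leaves implicit.
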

\begin{proof}
This is Lemma \ref{lemma26}.
\end{proof}
\begin{lemma}\label{x:lem44}
We can choose the constants $c_1$, $c_2$ such that
$$|\mathcal{U}_5|\leq \frac{x}{30k\log x}\:.$$
\end{lemma}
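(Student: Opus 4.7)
The plan is to mirror the proof of Lemma \ref{lemma28} almost verbatim, the only new ingredient being that the shift $+1$ is replaced by $+(2^k-1)$. First I would unfold the definition: $\mathcal{U}_5$ consists of integers $u\in[-y,y]$ with $|u|=p'$ prime and $p\nmid u+2^k-1$ for every $p\in\mathcal{P}_2$. Splitting into the signs $u>0$ and $u<0$, both subsets are counted by primes $p'\le y$ avoiding a single fixed residue class $a=\mp(2^k-1)$ modulo each $p\in\mathcal{P}_2$.

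Next I would apply a Brun-sieve upper bound of the type used in Lemma \ref{x:lemk2}, but with the fixed nonzero shift $a=\mp(2^k-1)$. For $p\in\mathcal{P}_2$ we have $p>\log x>|a|$ once $x$ is large, so $\gcd(a,p)=1$ and the sieve density is $\omega(p)/p=1/p$ exactly as in the case $a=\pm1$; the underlying sieve estimate (Lemma \ref{x:brun}) applies with the same multiplicative $\omega$ and the same constants $\kappa, A_1, A_2$. This yields
$$|\mathcal{U}_5| \ll \frac{y}{\log y}\prod_{p\in\mathcal{P}_2}\left(1-\frac{1}{p}\right),$$
with an implied constant independent of $k$.

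I would then invoke Mertens' theorem (Lemma \ref{x:lemak3}) on $\mathcal{P}_2=\{\log x<p\le z\}$ with $z=x^{c_1\log_3 x/\log_2 x}$, exactly as in the proof of Lemma \ref{lemma28}, to obtain
$$\prod_{p\in\mathcal{P}_2}\left(1-\frac{1}{p}\right)=\frac{(\log_2 x)^2}{c_1\log x\cdot\log_3 x}\left(1+O\left(\frac{1}{\log_2 x}\right)\right).$$
Substituting $y=c_2 x\log x\log_3 x/(\log_2 x)^2$ produces $|\mathcal{U}_5|\ll (c_2/c_1)\,x/\log x$ with an absolute implied constant. To finish, I choose $c_1$ large and $c_2$ small so that $c_2/c_1$ beats $1/(30k)$ times the implied constant, which gives the required bound $|\mathcal{U}_5|\le x/(30k\log x)$.

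The main (indeed only) obstacle is justifying that the Brun-sieve estimate of \cite{halb}, stated in Lemma \ref{x:lemk2} for $a\in\{-1,1\}$, extends to the fixed integer shift $a=\pm(2^k-1)$. Because only primes $p>\log x$ participate in $\mathcal{P}_2$ and $|a|\le 2^k$ is $O(1)$ as $x\to\infty$, this extension is immediate: the sieve hypotheses of Lemma \ref{x:brun} hold with unchanged parameters, and the dependence on $k$ enters only through the implied constant, which is absorbed into the final choice of $c_1$ and $c_2$.
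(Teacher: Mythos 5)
Your proposal is correct and follows essentially the same route as the paper: split $\mathcal{U}_5$ by the sign of $u$, apply a Brun-type upper bound sieve for primes in $[1,y]$ avoiding the residue class $1-2^k$ (respectively $2^k-1$) modulo each $p\in\mathcal{P}_2$, then use Mertens' theorem over $\mathcal{P}_2$ and choose $c_2/c_1$ small (depending on $k$). The only cosmetic difference is that the paper phrases this as a single application of Lemma \ref{x:brun} to all $n\le y$ with two residue classes removed for $p\in\mathcal{P}_2$ (giving the factor $\prod(1-2/p)(1-1/p)^{-1}$), whereas you layer the extra class on top of the prime-counting bound of Lemma \ref{x:lemk2}; the resulting estimate is identical.
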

\begin{proof}
We have 
$$\mathcal{U}_5=\mathcal{U}_{5,1}\cup(-\mathcal{U}_{5,2})$$
with
$$\mathcal{U}_{5,1}=\{u\in \mathcal{U}_3\::\: p\nmid u+2^k-1\ \text{for}\ p\in\mathcal{P}_2\}$$  
\begin{align*}
\mathcal{U}_{5,2}&=\{u\in \mathcal{U}_3\::\: p\nmid -u+2^k-1\ \text{for}\ p\in\mathcal{P}_2\}\\
&=\{u\in \mathcal{U}_3\::\: p\nmid u-2^k+1\ \text{for}\ p\in\mathcal{P}_2\}\:.
\end{align*}
We only give details for the estimate of $|\mathcal{U}_{5,1}|$, since the estimate of 
$|\mathcal{U}_{5,2}|$ is completely analogous.\\
We apply Lemma \ref{x:brun} with 
$$\mathcal{A}=\{n\::\:n\leq y\}\:.$$
For $p\in \mathcal{P}_1$ we define $\mathcal{A}_p$ by 
$$\mathcal{A}_p=\{n\in\mathcal{A}\::\: n\equiv 0\:(\bmod\: p)\ \text{or}\ n\equiv 1-2^k\:(\bmod\: p) \}\:.$$
We check whether the conditions for the application of Lemma \ref{x:brun} are satisfied. For $d\mid P(y)$ we set:
$$\mathcal{A}_d=\bigcap_{p\mid d}\mathcal{A}_p\:.$$
We partition the interval $(0,y]$ into $\lfloor y/d\rfloor$ subintervals of length $d$ and possibly one additional interval $I_{last}$ of length less than $d$.\\
Let $\omega(d)$ be the number of the solutions $(\bmod\: d)$ of the system
\begin{align*}
n&\equiv0\:(\bmod\: p),\ \ p\in\mathcal{P}_1\cup\mathcal{P}_2\\
n&\equiv 1-2^k\:(\bmod\: p),\ \ p\in\mathcal{P}_2 \:.\tag{**}
\end{align*}
By the Chinese Remainder Theorem, $\omega$ is multiplicative. Each interval of $d$
consecutive integers contains exactly $\omega(d)$ solutions of the system (**).\\
Thus 
$$\mathcal{A}_d=\frac{\omega(d)}{d}\:X+R_d,$$
where $|R_d|\leq \omega(d).$\\
Thus, Lemma \ref{x:brun} is applicable and we obtain:
\begin{align*}
|\mathcal{U}_{5,1}|&\leq S(\mathcal{A},\mathcal{P},z)\\
&\ll y\prod_{p\leq y}\left(1-\frac{1}{p}\right)\prod_{\log x<p\leq z}\left(1-\frac{2}{p}\right)\left(1-\frac{1}{p}\right)^{-1}\:.
\end{align*}
Well known estimates of elementary prime number theory as in the proof of Lemma
2.8 in \cite{konyagin}, give the result of Lemma \ref{x:lem44}.
\end{proof}
For the next definitions and results we follow the paper \cite{konyagin}.
\begin{definition}
Let
\begin{eqnarray}
\tilde{\mathcal{P}}_3=\left\{ 
  \begin{array}{l l}
   \left\{p\::\: \frac{x}{40k}<p\leq x,\ p\equiv 2\:(\bmod\:3) \right\} \:, & \quad \text{if $k$ is odd}\vspace{2mm}\\ 
   \left\{p\::\: \frac{x}{40k}<p\leq \frac{x}{2},\ p\equiv 3\:(\bmod\:2k) \right\} \:, & \quad \text{if $k$ is even}\:,\\
  \end{array} \right.
\nonumber
\end{eqnarray}
We now define the exceptional set $\mathcal{U}_6$ as follows:\\
For $k$ odd we set
$$\mathcal{U}_6=\emptyset\:.$$
For $k$ even and $\delta>0$, we set
$$\mathcal{U}_6=\left\{ u\in[-y,y]\::\: \left(\frac{-u}{p}\right)=1\ \ \text{for at most}\ \frac{\delta x}{\log x}\ \text{primes}\ p\in\tilde{\mathcal{P}}_3 \right\}\:.$$
\end{definition}
We shall make use of the following result from \cite{konyagin}.
\begin{lemma}
$$|\mathcal{U}_6| \ll_{\varepsilon} x^{1/2+2\varepsilon}\:.$$
\end{lemma}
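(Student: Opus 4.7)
My approach is to show that membership in $\mathcal{U}_6$ forces the squarefree kernel of $u$ to lie in a very small exceptional set, after which the bound $x^{1/2+2\varepsilon}$ follows simply by counting perfect squares.

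Since $\mathcal{U}_6=\emptyset$ when $k$ is odd, assume $k$ is even. Because $4\mid 2k$, every $p\in\tilde{\mathcal{P}}_3$ satisfies $p\equiv 3\,(\bmod\,4)$, so $\left(\frac{-u}{p}\right)=-\left(\frac{u}{p}\right)$ whenever $p\nmid u$. Set $N=|\tilde{\mathcal{P}}_3|$; by Dirichlet's theorem on primes in arithmetic progressions, $N\asymp_k x/\log x$. Since $|u|\leq y=x^{1+o(1)}$, at most $O_k(1)$ primes of $\tilde{\mathcal{P}}_3$ divide $u$, so the defining condition of $\mathcal{U}_6$ rewrites as
\[
T_u\;:=\;\sum_{p\in\tilde{\mathcal{P}}_3}\left(\frac{u}{p}\right)\;\geq\;N-\frac{2\delta x}{\log x}-O_k(1)\;\geq\;\eta N,
\]
for some absolute $\eta>0$, provided $\delta$ was chosen sufficiently small relative to the implicit constant in the estimate of $N$.

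Factor $u=\sigma ab^2$ with $\sigma\in\{\pm 1\}$, $a\geq 1$ squarefree, $b\geq 1$. Since $p\equiv 3\,(\bmod\,4)$ gives $\left(\frac{\sigma}{p}\right)=\sigma$ while $\left(\frac{b^2}{p}\right)=1$ for $p\nmid b$, one has $\left(\frac{u}{p}\right)=\sigma\left(\frac{a}{p}\right)$ whenever $p\nmid ab$, hence $T_u=\sigma\,T_a+O(1)$, where $T_a:=\sum_{p\in\tilde{\mathcal{P}}_3}\left(\frac{a}{p}\right)$. Consequently each $u\in\mathcal{U}_6$ determines a squarefree $a\leq y$ with $|T_a|\geq \eta N/2$. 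To bound the number of such exceptional $a$, I would invoke the quadratic large sieve of Heath-Brown for real character sums, which yields
\[
\sum_{\substack{a\leq y\\ a\text{ squarefree}}}|T_a|^2\;\ll_\varepsilon\;(xy)^\varepsilon(x+y)\,N\;\ll\;x^{2+\varepsilon'}\,,
\]
so $\#\{a\leq y:|T_a|\geq\eta N/2\}\ll_\varepsilon x^{\varepsilon''}$. Each admissible $a$ contributes at most $2\lfloor\sqrt{y/a}\rfloor+1\leq 3\sqrt{y}$ pairs $(\sigma,b)$ with $|u|\leq y$, and therefore
\[
|\mathcal{U}_6|\;\ll_\varepsilon\;\sqrt{y}\cdot x^{\varepsilon''}\;\ll\;x^{1/2+2\varepsilon}\,.
\]

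The hard part is the uniform mean-square bound on $T_a$. For an individual squarefree $a$, the character sum has length $N\ll x/\log x$ against a Jacobi character of conductor essentially $|a|\asymp x^{1+o(1)}$, so P\'olya--Vinogradov is trivial and Siegel's theorem cannot rule out a single exceptional conductor. Heath-Brown's bilinear-form estimate circumvents both obstacles by averaging over $a$ and using quadratic reciprocity to interchange the roles of $a$ and $p$, which is precisely how the square-root savings emerge in the aggregate even though they remain inaccessible pointwise.
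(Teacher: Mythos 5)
The paper offers no argument here---it simply cites formula (4) of \cite{konyagin}---and your proposal is essentially a correct reconstruction of that source's proof: extract the square part $u=\sigma ab^2$, use $p\equiv 3\pmod 4$ to convert the defining condition of $\mathcal{U}_6$ into the statement that $\sum_{p\in\tilde{\mathcal{P}}_3}\left(\frac{a}{p}\right)$ has size $\gg x/\log x$, bound the number of admissible squarefree kernels $a$ by $x^{o(1)}$ via Heath-Brown's quadratic large sieve (Chebyshev on the mean square), and let the $O(\sqrt{y})$ choices of $b$ account for the exponent $1/2$. The only loose ends are minor: the element $u=0$ must be set aside, Heath-Brown's mean-value theorem is stated for odd squarefree moduli so even $a$ need a one-line reduction, and the argument presupposes $|\tilde{\mathcal{P}}_3|\gg_k x/\log x$, which the congruence $p\equiv 3\pmod{2k}$ as transcribed in this paper only delivers when $3\nmid k$ (a defect of the stated definition rather than of your proof).
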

\begin{proof}
This is formula (4) from \cite{konyagin}, where $\mathcal{U}_6$ is denoted by $\mathcal{U'}$.
\end{proof}
\begin{definition}\label{x:def47}
We set
$$\mathcal{U}_7=\mathcal{U}_4\cup\mathcal{U}_5\:.$$
\end{definition}
\begin{lemma}\label{x:lem48}
We have 
$$|\mathcal{U}_7|\leq \frac{x}{20k\log x}\:.$$
\end{lemma}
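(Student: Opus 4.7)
The proof is essentially a routine combination of the two preceding estimates via the union bound, so I would organize it as follows.

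First I would invoke Definition~\ref{x:def47} and observe that $|\mathcal{U}_7| \le |\mathcal{U}_4| + |\mathcal{U}_5|$. For the first summand I would cite Lemma~\ref{x:lem43}, which gives $|\mathcal{U}_4| \ll x/(\log x)^4$. For the second summand I would cite Lemma~\ref{x:lem44}, which (after choosing $c_1$, $c_2$ appropriately) delivers $|\mathcal{U}_5| \le x/(30k \log x)$.

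The main point is then simply that the bound on $|\mathcal{U}_4|$ is of lower order. Concretely, for $x$ sufficiently large (depending on $k$),
\[
\frac{C\,x}{(\log x)^4} \le \left(\frac{1}{20k} - \frac{1}{30k}\right) \frac{x}{\log x} = \frac{x}{60k\log x},
\]
where $C$ is the implied constant from Lemma~\ref{x:lem43}; this is valid because $1/(\log x)^3 \to 0$. Combining,
\[
|\mathcal{U}_7| \le |\mathcal{U}_4| + |\mathcal{U}_5| \le \frac{x}{60k\log x} + \frac{x}{30k\log x} = \frac{x}{20k\log x},
\]
which is the desired bound.

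There is no real obstacle here: the only thing to be careful about is that the constants $c_1,c_2$ from Lemma~\ref{x:lem44} were chosen so that the numerical factor $1/(30k)$ in front of $x/\log x$ is achieved, leaving a positive gap to $1/(20k)$ that absorbs the $O(x/(\log x)^4)$ error term for all sufficiently large $x$. This is the same style of bookkeeping used in passing from Lemmas~\ref{lemma26} and~\ref{lemma28} to the bound on $|U_6|$ in Section~2, and it goes through verbatim in the present setting.
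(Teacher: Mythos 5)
Your proof is correct and follows exactly the paper's route: the paper simply states that the lemma follows from Definition~\ref{x:def47} together with Lemmas~\ref{x:lem43} and~\ref{x:lem44}, and your write-up supplies the same union bound with the (correct) bookkeeping showing that the $O(x/(\log x)^4)$ term is absorbed for large $x$.
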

\begin{proof}
This follows from Definition \ref{x:def47} and Lemmas \ref{x:lem43}, \ref{x:lem44}
\end{proof}
We now introduce the congruence conditions, which determine the integer  $m_0$ uniquely $(\bmod\: P(x))$.
\begin{definition}\label{x:def49}
\[
m_0\equiv 1\:(\bmod\: p),\ \text{for}\ p\in\mathcal{P}_1\:, \tag{$C_1$}
\]
\[
m_0\equiv 2\:(\bmod\: p),\ \text{for}\ p\in\mathcal{P}_2\:. \tag{$C_2$}
\]
\end{definition}

\noindent For the introduction of the congruence conditions $(C_3)$ we make use of Lemma 
\ref{x:lem48}.\\
Since 
$$|\tilde{\mathcal{P}}_3|\geq |\mathcal{U}_7|,$$
there is an injective mapping
$$\Phi\::\: \mathcal{U}_4\rightarrow \tilde{\mathcal{P}}_3,\ \ u\rightarrow \mathcal{P}_u.$$
We set $$\mathcal{P}_3=\Phi(\mathcal{U}_4)\:.$$
For all $u$, for which the congruence
$$m^k\equiv -(u-1)\:(\bmod\: p_u)$$
is solvable, choose a solution $m_u$ of this congruence.\\
The set $(C_3)$ of congruences is then defined by 
\[
m_0\equiv m_u\:(\bmod\: p_u),\ \ p_u\in\mathcal{P}_3\:.  \tag{$C_3$}
\]
Let 
$$\mathcal{P}_4=\{p\in[0,x)\::\: p\not\in\mathcal{P}_1\cup\mathcal{P}_2\cup\mathcal{P}_3  \}\:.$$
The set of congruences is then defined by
\[
m_0\equiv 1\:(\bmod\: p),\ \ p\in\mathcal{P}_4\:.  \tag{$C_4$}
\]
\begin{lemma}
The congruence systems $(C_1)-(C_4)$ and the condition $1\leq m_0\leq P(x)-1$
determine $m_0$ uniquely. We have $(m_0, P(x))=1$.
\end{lemma}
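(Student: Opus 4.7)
The plan is to reduce the statement to the Chinese Remainder Theorem followed by a prime-by-prime verification of coprimality.

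First I would observe that $\mathcal{P}_1,\mathcal{P}_2,\mathcal{P}_3,\mathcal{P}_4$ form a disjoint partition of the set of primes $p\leq x$: indeed $\mathcal{P}_1\cup\mathcal{P}_2$ covers all primes up to $x/40k$, $\mathcal{P}_3\subseteq\tilde{\mathcal{P}}_3\subseteq(x/40k,x]$, and $\mathcal{P}_4$ is defined as the complement. Since $P(x)=\prod_{p\leq x}p$ is squarefree and the conditions $(C_1)$--$(C_4)$ prescribe $m_0$ modulo each prime factor of $P(x)$, the Chinese Remainder Theorem furnishes a unique residue class $m_0\pmod{P(x)}$. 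The constraint $1\leq m_0\leq P(x)-1$ then pins $m_0$ down, provided the CRT solution is not $\equiv 0\pmod{P(x)}$; and this last statement is precisely the claim $(m_0,P(x))=1$.

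To verify $(m_0,P(x))=1$ I would argue prime by prime. For $p\in\mathcal{P}_1\cup\mathcal{P}_4$, conditions $(C_1)$ and $(C_4)$ give $m_0\equiv 1\pmod p$, so $p\nmid m_0$. For $p\in\mathcal{P}_2$, condition $(C_2)$ gives $m_0\equiv 2\pmod p$, and since $p>\log x>2$ for $x$ large, $p\nmid m_0$. For $p_u\in\mathcal{P}_3$, condition $(C_3)$ yields $m_0\equiv m_u\pmod{p_u}$ with $m_u^k\equiv -(u-1)\pmod{p_u}$; we need $m_u\not\equiv 0\pmod{p_u}$, which is equivalent to $p_u\nmid u-1$.

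The main obstacle lies in this last point. The key observation is that for each $u\in\mathcal{U}_4$ one has $|u-1|\leq y+1$, so $u-1$ has at most $\log(y+1)/\log(x/40k)=1+o(1)$ prime factors exceeding $x/40k$. Since $|\tilde{\mathcal{P}}_3|\gg x/\log x$ by the prime number theorem in arithmetic progressions, one can refine the injective map $\Phi\colon\mathcal{U}_4\to\tilde{\mathcal{P}}_3$ by a greedy (or Hall-type) matching argument so that $p_u\nmid u-1$ for every $u\in\mathcal{U}_4$. With this refinement, $m_u\not\equiv 0\pmod{p_u}$ for all $u$, and the coprimality $(m_0,P(x))=1$ follows, completing the proof.
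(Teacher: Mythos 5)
Your proof follows the same basic route as the paper --- the Chinese Remainder Theorem for uniqueness, then a prime-by-prime verification of coprimality --- but it is more careful on the one point where the paper's one-line argument is actually incomplete. The paper simply asserts that $(C_1)$--$(C_4)$ make $m_0$ coprime to every $p\leq x$; for $p\in\mathcal{P}_1\cup\mathcal{P}_2\cup\mathcal{P}_4$ this is immediate, as you note (and for $p\in\mathcal{P}_2$ one does need $p>\log x>2$), but for $p_u\in\mathcal{P}_3$ the congruence $m_u^k\equiv-(u-1)\pmod{p_u}$ forces $p_u\mid m_0$ exactly when $p_u\mid u-1$, and nothing in the paper's definition of the injection $\Phi$ rules this out: for instance $u-1=2q$ with $q\in\tilde{\mathcal{P}}_3$ and $u$ smooth or prime is possible, and an arbitrary injection could send $u$ to $q$. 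Your repair is correct: since $|u-1|\leq y+1<(x/40k)^2$ for large $x$, each $u$ excludes at most one prime of $\tilde{\mathcal{P}}_3$, and because $|\tilde{\mathcal{P}}_3|\gg x/\log x$ comfortably exceeds $|\mathcal{U}_7|$, a greedy or Hall-type choice of $\Phi$ avoiding the excluded primes exists; this modification does not disturb any later use of $\Phi$, which requires only injectivity (plus, for $k$ even, the solvability considerations handled through $\mathcal{U}_6$, and removing one further prime per $u$ does not affect that counting). So your argument matches the paper's intended proof while supplying a justification the paper omits for the $\mathcal{P}_3$ primes.
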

\begin{proof}
The uniqueness follows from the Chinese Remainder Theorem. The coprimality 
follows, since by the definition of $(C_1)-(C_4)$ $m_0$ is coprime to all $p$, with
$0< p\leq x$.
\end{proof}
\begin{lemma}
Let $m\equiv m_0\:(\bmod\: P(x))$. Then $(m, P(x))=1$ and the number
$$m^k+(u-1)$$
is composite for all $u\in[-y,y]\setminus \mathcal{U}_6$\:.
\end{lemma}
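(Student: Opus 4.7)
My plan is to mimic the proof of Lemma~\ref{lemma212}, substituting $m^k+(u-1)$ for $m+u$. The coprimality $(m,P(x))=1$ is immediate from the previous lemma together with the hypothesis $m\equiv m_0\,(\bmod\,P(x))$, so all the work is in showing that $m^k+(u-1)$ is composite for every $u\in[-y,y]\setminus\mathcal{U}_6$.

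First I would record the analogue of Lemma~\ref{x:paliolem25}, namely $[-y,y]\subseteq \mathcal{U}_1\cup\mathcal{U}_3\cup\mathcal{U}_4$, so that every relevant $u$ lies in $\mathcal{U}_1\cup(\mathcal{U}_3\setminus\mathcal{U}_5)\cup\mathcal{U}_7$. I would then split into three cases. If $u\in\mathcal{U}_1$, some $p\in\mathcal{P}_1$ divides $u$; the congruence $(C_1)$ gives $m\equiv 1\,(\bmod\,p)$, hence $m^k\equiv 1\,(\bmod\,p)$, so $p\mid m^k+(u-1)$. If $u\in\mathcal{U}_3\setminus\mathcal{U}_5$, the definition of $\mathcal{U}_5$ furnishes a prime $p\in\mathcal{P}_2$ with $p\mid u+2^k-1$; the congruence $(C_2)$ gives $m\equiv 2\,(\bmod\,p)$, so $m^k+(u-1)\equiv 2^k+u-1\equiv 0\,(\bmod\,p)$. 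Finally, if $u\in\mathcal{U}_7$, then $u$ lies in the domain of $\Phi$, and $(C_3)$ yields $m\equiv m_u\,(\bmod\,p_u)$ with $m_u^k\equiv -(u-1)\,(\bmod\,p_u)$, whence $p_u\mid m^k+(u-1)$. In each case the chosen divisor $p$ is at most $x$ while $m^k+(u-1)$ is much larger once $m$ is taken sufficiently large in the arithmetic progression $m\equiv m_0\,(\bmod\,P(x))$, so the divisibility is proper and compositeness follows.

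The delicate point, and the only place where I expect genuine difficulty, is the third case: I must verify that for $u\notin\mathcal{U}_6$ the congruence $m_u^k\equiv -(u-1)\,(\bmod\,p_u)$ is indeed solvable, so that $(C_3)$ can legitimately be imposed. This uses the residue-class conditions defining $\tilde{\mathcal{P}}_3$ (the condition $p\equiv 2\,(\bmod\,3)$ for odd $k$, or $p\equiv 3\,(\bmod\,2k)$ for even $k$), which reduce the $k$-th power residue condition to triviality or to a single Legendre symbol condition, thereby matching the quadratic-residue condition used to define $\mathcal{U}_6$ in \cite{konyagin}. Aligning the $-u$ appearing in $\mathcal{U}_6$ with the $-(u-1)$ needed by $(C_3)$ requires absorbing an additive shift into the injection $\Phi$, and one should read the domain of $\Phi$ as $\mathcal{U}_7$ (consistent with the size bound $|\tilde{\mathcal{P}}_3|\geq|\mathcal{U}_7|$) rather than only $\mathcal{U}_4$, so that cases (b) and (c) together cover all of $\mathcal{U}_2$. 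Once this bookkeeping is settled the three cases exhaust the complement of $\mathcal{U}_6$ in $[-y,y]$ and the lemma is proved.
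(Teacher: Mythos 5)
Your proof takes essentially the same route as the paper's: the identical case split ($u\in\mathcal{U}_1$ handled via $(C_1)$ with $m^k\equiv 1$, $u\in\mathcal{U}_3\setminus\mathcal{U}_5$ via $(C_2)$ with $2^k+u-1\equiv 0$, and the remaining $u\in\mathcal{U}_7$ via $(C_3)$), with the same congruence computations. You actually go further than the paper, which dismisses the third case with ``the remaining cases are checked similarly'': you correctly flag the solvability of $m_u^k\equiv-(u-1)\pmod{p_u}$, the $\mathcal{U}_4$ versus $\mathcal{U}_7$ domain of $\Phi$, and the $-u$ versus $-(u-1)$ mismatch in the definition of $\mathcal{U}_6$, all of which are genuine loose ends in the source.
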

\begin{proof}
For $u\in\mathcal{U}_1$, there is $p\in\mathcal{P}_1$ with $p\mid u$. Therefore,
since by Definition \ref{x:def49}, the system $(C_1)$ implies that $m_0\equiv 1\:(\bmod\: p)$, we have
$$m^k+(u-1)\equiv m_0^k+(u-1)\equiv 1+u-1\equiv u\equiv 0\:(\bmod\: p)\:,$$
i.e.
$$p\mid m^k+(u-1)\:.$$
For $u\in\mathcal{U}_3$, $u\not\in\mathcal{U}_5$, there is $p\in\mathcal{P}_2$
with $p\mid  u+2^k-1.$\\
Since by $(C_2)$ $m_0\equiv2\:(\bmod\: p)$, we have
$$m_0^k+(u-1)\equiv 2^k-2^k\equiv0\:(\bmod\: p)\:,$$
i.e.
$$p\mid m^k+(u-1)\:.$$
The remaining cases, except $u\in\mathcal{U}_6$, are checked similarly.
\end{proof}
\section{Conclusion of the proof of Theorem \ref{main}}
Let now $x$ be such that $P(x)$ is a good modulus in the sense of Definition
\ref{x:definition31}. By Lemma \ref{x:lem32}, there are arbitrarily large such elements $x$.
Let $D$ be a sufficiently large positive integer.
Let $\mathcal{M}$ be the matrix with $P(x)^{D-1}$ rows and $U=2\lfloor y\rfloor+1$
columns, with the $r,u$ element being
$$a_{r,u}=(m+rP(x))^k+u-1,$$
where $1\leq r\leq P(x)^{D-1}$ and $-y\leq u\leq y$.\\
Let $N(x,k)$ be the number of perfect $k$-th powers of primes in the column
$$\mathcal{C}_1=\{a_{r,1}\::\: 1\leq r\leq P(x)^{D-1}  \}\:.$$
Since $P(x)$ is a good modulus, we have by Lemma \ref{x:lem32} that
\[
N_0(x,k)\geq C_0(k)\:\frac{P(x)^{D-1}}{\log(P(x)^{D-1})}\:.\tag{5.1}
\]
Let $\mathcal{R}_1$ be the set of rows ${R}_1$, in which these primes appear. We now give an upper bound for the number $N_1$ of rows ${R}_r\in\mathcal{R}_1$, which contain primes.\\
We observe that for all other rows  ${R}_r\in\mathcal{R}_1$, the element 
$$a_{r,1}=(m_0+rP(x))^k$$
is a prime avoiding $k$-th power of the prime $m_0+rP(x)$.
\begin{lemma}\label{x:lem5151}
For sufficiently small $c_2$, we have
$$N_1\leq \frac{1}{2}N_0(x,k)\:.$$
\end{lemma}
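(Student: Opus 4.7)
\textit{Plan.} The key reduction is based on the preceding lemma: for any $m\equiv m_0\,(\bmod\,P(x))$ and any $u\in[-y,y]\setminus\mathcal{U}_6$, the value $m^k+(u-1)$ is forced to be composite by the congruence conditions $(C_1)$--$(C_4)$. Taking $m=m_0+rP(x)$, this means that for every row $R_r\in\mathcal{R}_1$ the entry $a_{r,u}$ is automatically composite for all $u\in[-y,y]\setminus\mathcal{U}_6$; and $a_{r,1}=(m_0+rP(x))^k$, being a prime power, is not itself prime once $k\ge 2$. A row $R_r\in\mathcal{R}_1$ can therefore fail to give a prime-avoiding $k$-th power only if $a_{r,u}$ happens to be prime for some $u\in\mathcal{U}_6$. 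When $k$ is odd $\mathcal{U}_6=\emptyset$ and the lemma is trivial, so I assume $k$ is even; then
\[
N_1\;\le\;\sum_{u\in\mathcal{U}_6}N_2(u),\qquad N_2(u):=\#\{\,r\in[1,P(x)^{D-1}]:\;m_0+rP(x)\ \text{and}\ a_{r,u}\ \text{are both prime}\,\}.
\]

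For each fixed $u\in\mathcal{U}_6$, I would bound $N_2(u)$ by applying Brun's sieve (Lemma \ref{x:brun}) to the polynomial
\[
F_u(r)=(m_0+rP(x))\bigl((m_0+rP(x))^k+(u-1)\bigr)
\]
of degree $k+1$, with sifting set $\mathcal{A}=\{1,\dots,P(x)^{D-1}\}$ and sifting primes $\mathcal{P}=\{p:\,p>x\}$. No contribution from primes $p\le x$ is needed, because by construction $m_0+rP(x)$ and $(m_0+rP(x))^k+(u-1)$ are both coprime to $P(x)$ for the relevant $u$. The multiplicative function $\omega(p)$ given by the number of roots of $F_u$ mod $p$ satisfies $\omega(p)\le k+1$, and the sum $\sum_{x<p<z}\omega(p)\log p/p$ is at most $\kappa(\log z-\log x)+O_k(1)$ with $\kappa\le k+1$ (on average $\kappa=2$, by Chebotarev, since the linear and the degree-$k$ factor each contribute one root on average). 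Taking $z=P(x)^{(D-1)/C}$ for a constant $C=C(k)$ large enough to validate the hypotheses of Lemma \ref{x:brun}, the sieve yields uniformly in $u\in\mathcal{U}_6$
\[
N_2(u)\;\ll_k\;P(x)^{D-1}\prod_{x<p<z}\!\left(1-\frac{\omega(p)}{p}\right)\;\ll_k\;\frac{P(x)^{D-1}(\log x)^{\kappa}}{x^{\kappa}}.
\]

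Combining this with the bound $|\mathcal{U}_6|\ll_{\varepsilon}x^{1/2+2\varepsilon}$ borrowed from \cite{konyagin} and with $N_0(x,k)\gg_k P(x)^{D-1}/x$ from (5.1), I obtain
\[
\frac{N_1}{N_0(x,k)}\;\ll_{k,\varepsilon}\;\frac{(\log x)^{\kappa}}{x^{\kappa-3/2-2\varepsilon}}\;\longrightarrow\;0\quad(x\to\infty),
\]
provided $\varepsilon<\tfrac{1}{4}$ (since $\kappa\ge 2$). Hence $N_1\le\tfrac12 N_0(x,k)$ for all sufficiently large $x$. The smallness of $c_2$ enters in two places: it is already needed to validate Lemma \ref{x:lem44}, which is used to define $\mathcal{U}_7$ and hence the congruence system that produces $m_0$; and it is needed to keep the implicit constants in the sieve bound above controlled uniformly over $u\in\mathcal{U}_6$, whose range depends on $y=c_2 x\log x\log_3 x/(\log_2 x)^2$.

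The main obstacle is the uniform verification of the Brun sieve axioms for $F_u$ over all $u\in\mathcal{U}_6$: one must ensure that even for those exceptional $u$ where $(m_0+rP(x))^k+(u-1)$ is reducible modulo many primes, the crude bound $\omega(p)\le k+1$ keeps the sieve dimension bounded by a constant depending only on $k$. Once this is in hand, the numerical gain $x^{\kappa-3/2-2\varepsilon}$ against the sum over $\mathcal{U}_6$ closes the estimate with room to spare.
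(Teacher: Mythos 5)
Your proposal follows essentially the same route as the paper: bound $N_1$ by $\sum_{u\in\mathcal{U}_6}T(u)$, estimate each term by Brun's sieve applied to $(m_0+rP(x))\bigl((m_0+rP(x))^k+u-1\bigr)$ with sieving primes $x<p\le P(x)$, and combine with $|\mathcal{U}_6|\ll_\varepsilon x^{1/2+2\varepsilon}$ and the lower bound $N_0(x,k)\gg P(x)^{D-1}/\log(P(x)^{D-1})$. The uniform-in-$u$ estimate $\prod_{x<p\le P(x)}(1-\rho(p)/p)\ll_{k,\varepsilon}|u|^{\varepsilon}\log x/\log P(x)$, which you correctly identify as the main remaining obstacle, is exactly what the paper imports as Lemma 3.1 of \cite{konyagin}, so your argument matches the paper's once that citation is supplied.
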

\begin{proof}
For all $v$ with $v-1\in\mathcal{U}_6$, let 
$$T(v)=\{ r\::\: 1\leq r\leq P(x)^{D-1},\ m_0+rP(x)\ \text{and}\ (m_0+rP(x))^k+v-1
\ \text{are primes} \}.$$
We have
\[
N_1\leq \sum_{v\in\mathcal{U}_6}T(v)\:.\tag{5.2}
\]
For the estimate of $T(v)$ we apply again Lemma \ref{x:brun}.\\ We set
\begin{align*}
g(r)&=m_0+rP(x)\\
h(r)&=(m_0+rP(x))^k+v-1\\
\mathcal{A}&=\{ g(r)h(r)\::\: 1\leq r \leq P(x)^{D-1}\}\:,\\
\mathcal{A}_p&=\{n\in\mathcal{A}\::\: n\equiv 0\:(\bmod\: p)\}, \text{for each prime}\
p\ \text{with}\ x<p\leq P(x)\:.
\end{align*}
We let $\omega(p)$ be the number of solutions of the congruence 
$$g(r)h(r)\equiv 0\:(\bmod\: p),\ \ \text{for}\ \ p>x\:.$$
Since $p\nmid P(x)$, the linear congruence 
$$g(r)\equiv 0\:(\bmod\: p)$$
has exactly one solution.\\
Let
$$\rho(p)=\left| \{n\:(\bmod\: p)\::\: n^k+v-1\equiv\:0\:(\bmod\:p)\} \right|\:. $$
Then the congruence
$$h(r)\equiv0\:(\bmod\: p)$$
has $\rho(p)$ solutions $(\bmod\: p)$.\\
By Lemma \ref{x:brun}, we have:
\begin{align*}
T(v)&\leq S(\mathcal{A},\mathcal{P},P(x))\\
&\ll P(x)^{D-1}\:\prod_{x<p\leq P(x)}\left(1-\frac{1}{p}\right)\prod_{x<p\leq P(x)}\left(1-\frac{\rho(p)}{p}\right)\:.\tag{5.3}
\end{align*}
By Lemma 3.1 of \cite{konyagin}, we have
$$\prod_{x<p\leq P(x)}\left(1-\frac{\rho(p)}{p}\right)\ll_{k,\varepsilon} |v|^\varepsilon\:
\frac{\log x}{\log P(x)}\:.$$
Lemma \ref{x:lem5151} now follows from (5.2), (5.3) and the bound for $|\mathcal{U}_6|$.\\
This completes the proof of Theorem \ref{main}.
\end{proof}

\vspace{10mm}


\begin{thebibliography}{99}%
\bibitem{cojo} A. C. Cojocaru, M. Ram Murty, \textit{An Introduction to Sieve Methods and their Applications}, Cambridge Univ. Press, 2006.
\bibitem{konyagin} K. Ford, D. R. Heath-Brown and S. Konyagin, \textit{Large gaps between consecutive prime numbers containing perfect
powers}, In: Analytic Number Theory. In honor of Helmut
Maier's 60th birthday, Springer, New York, 2015 (to appear).
\bibitem{gala} P. X. Gallagher, \textit{A large sieve density estimate near $\sigma=1$},
Invent. Math., 11(1970), 329-339.
\bibitem{halb} H. Halberstam and H. -E. Richert, \textit{Sieve Methods}, Academic Press, London, 1974.
\bibitem{hardy} G. H. Hardy and E. M. Wright, \textit{An Introduction to the Theory of Numbers}, 5th edition, Oxford Univ. Press, 1979. 
\bibitem{heath} D. R. Heath-Brown, \textit{The least square-free number in an arithmetic progression}, J. Reine Angew. Math., 332(1982), 204-220.
\bibitem{maier} H. Maier, \textit{Chains of large gaps between consecutive primes,}
Adv. in Math., 39(1981), 257-269.
\bibitem{rankin} R. A. Rankin, \textit{The difference between consecutive prime numbers},
J. London Math. Soc., 13(1938), 242-247. 
\end{thebibliography}
\end{document}